\theoremstyle{plain}
\newtheorem{theorem}{Theorem}[section]
\newtheorem{conjecture}[theorem]{Conjecture}
\newtheorem{lemma}[theorem]{Lemma}
\newtheorem*{claim*}{Claim}
\newtheorem*{problem*}{Problem}
\newtheorem*{conjecture*}{Conjecture}
\newtheorem*{theorem*}{Theorem}
\theoremstyle{definition}
\newtheorem{definition}[theorem]{Definition}
\newcommand{\ad}{\mathrm{ad}}
\newcommand{\Aut}{\mathrm{Aut}}
\newcommand{\Spec}{\mathrm{Spec}}
\newcommand{\new}{\mathrm{new}}
\newcommand{\stab}{\mathrm{Stab}}
\newcommand{\A}{\mathrm{A}}
\newcommand{\B}{\mathrm{B}}
\newcommand{\C}{\mathrm{C}}
\renewcommand{\L}{\mathrm{L}}
\renewcommand{\phi}{\varphi}
\renewcommand{\epsilon}{\varepsilon}
\begin{document}

\title{An expansion algorithm for constructing axial algebras}

\author{Justin M\textsuperscript{c}Inroy\footnote{School of Mathematics, 
University of Bristol, Bristol, BS8 1TW, UK, and the Heilbronn Institute 
for Mathematical Research, Bristol, UK, email: 
justin.mcinroy@bristol.ac.uk} \and Sergey Shpectorov\footnote{School of 
Mathematics, University of Birmingham, Edgbaston, Birmingham, B15 2TT, 
UK, email: S.Shpectorov@bham.ac.uk}}


\date{}
\maketitle

\begin{abstract}
An \emph{axial algebra} $A$ is a commutative non-associative algebra 
generated by primitive idempotents, called \emph{axes}, whose adjoint 
action on $A$ is semisimple and multiplication of eigenvectors is 
controlled by a certain fusion law. Different fusion laws define 
different classes of axial algebras. 

Axial algebras are inherently related to groups. Namely, when the fusion 
law is graded by an abelian group $T$, every axis $a$ leads to a subgroup 
of automorphisms $T_a$ of $A$. The group generated by all $T_a$ is 
called the \emph{Miyamoto group} of the algebra. We describe a new 
algorithm for constructing axial algebras with a given Miyamoto group. A 
key feature of the algorithm is the expansion step, which allows us to 
overcome the $2$-closeness restriction of Seress's algorithm computing 
Majorana algebras.

At the end we provide a list of examples for the Monster fusion law, 
computed using a {\sc magma} implementation of our algorithm. 
\end{abstract}

\section{Introduction}
Axial algebras are a new class of non-associative algebras introduced 
recently by Hall, Rehren and Shpectorov \cite{Axial1} as a broad 
generalization of the class of Majorana algebras of Ivanov \cite{I09}. 
The key features of these algebras came from the theory of vertex 
operator algebras (VOAs) which first arose in connection with 2D 
conformal field theory and they were used by Frenkel, Lepowsky and 
Meurman \cite{FLM} in their construction of the moonshine VOA 
$V^\natural$ whose automorphism group is the Monster $M$, the largest 
sporadic finite simple group. The rigorous theory of VOAs was developed 
by Borcherds \cite{B86} as part of his proof of the monstrous moonshine 
conjecture.  

Roughly speaking, VOAs are infinite dimensional graded vector spaces 
$V = \bigoplus_{i=0}^\infty V_i$ with infinitely many products linked in 
an intricate way. The Monster was originally constructed by Griess 
\cite{G82} as the automorphism group of a $196,883$-dimensional 
non-associative real algebra, called the Griess algebra, and the 
Moonshine VOA $V^\natural$ contains a unital deformation of the Griess 
algebra as its weight $2$ part $V_2^\natural$.

One of the key properties that axial algebras axiomatise was first 
observed in VOAs by Miyamoto \cite{Miy96}. He showed that you could 
associate involutory automorphisms $\tau_a$ of a VOA $V$, called 
\emph{Miyamoto involutions}, to special conformal vectors $a$ in $V_2$ 
called \emph{Ising vectors} \cite{Miy96}.  Moreover, in the Moonshine 
VOA, $\frac{a}{2}$ is an idempotent in the Griess algebra $V_2^\natural$, 
called a \emph{$2\mathrm{A}$-axis} because the corresponding involution 
$\tau_a$ lies in the class $2A$ of the Monster $M$.

The subalgebras of the Griess algebra generated by two 
$2\mathrm{A}$-axes, which we call \emph{dihedral subalgebras}, were 
first studied by Norton \cite{C85}. He showed that the isomorphism class 
of the dihedral subalgebra generated by $2A$-axes $a$ and $b$ is 
determined by the conjugacy class of the product $\tau_a \tau_b$. There 
are nine classes in $M$ containing products of two $2A$ involutions, 
labelled $1\textup{A}$, $2\textup{A}$, $2\textup{B}$, $3\textup{A}$, 
$3\textup{C}$, $4\textup{A}$, $4\textup{B}$, $5\textup{A}$ and 
$6\textup{A}$. Remarkably, Sakuma \cite{sakuma} showed that each sub VOA 
generated by two Ising vectors is also one of nine isomorphism types. 
Therefore, the above nine classes in $M$ are used as labels for the 
$2$-generated VOAs arising in Sakuma's theorem. 

Sakuma's result was extended to Majorana algebras in \cite{IPSS} and 
later to axial algebras with the Monster fusion law and a Frobenius 
form\footnote{Franchi, Mainardis and Shpectorov announced at the Axial 
Algebra Focused Workshop in Bristol in May 2018 that the Frobenius form 
condition has been removed.} in \cite{Axial1}.

Majorana algebras were introduced by Ivanov \cite{I09} to abstract the 
properties of $2A$-axes. Axial algebras provide a further broad 
generalisation removing the less essential restrictions of Majorana 
algebras. An \emph{axial algebra} is a commutative non-associative 
algebra generated by \emph{axes}, that is, primitive semisimple 
idempotents whose adjoint eigenvectors multiply according to a certain 
fusion law.  We say that an axial algebra is of \emph{Monster type} if 
its fusion law is the Monster fusion law (see Table 
\ref{tab:monsterfusion}).  For the exact details see Section 
\ref{sec:background}.  A Majorana algebra is then an axial algebra of Monster type which satisfies some additional conditions.  

Whenever the fusion law is $T$-graded, where $T$ is an abelian group, 
associated to each axis $a$ we get an automorphism $\tau_a(\chi)$ for 
every linear character $\chi\in T^*$. We define 
$T_a=\langle\tau_a(\chi):\chi\in T^*\rangle$, which has size at most 
$|T|$. The group generated by the $T_a$ for all axes $a$ is called the 
\emph{Miyamoto group}. For the important motivating example of the 
Griess algebra, the fusion law is $\mathbb{Z}_2$-graded and so, for 
every axis $a$, there is an involutory automorphism 
$\tau_a:=\tau_a(\chi_{-1})$ corresponding to the unique non-trivial 
character $\chi_{-1}$ of $\mathbb{Z}_2$. The Miyamoto group generated 
by all the $\tau_a$ is the Monster $M$ and the $\tau_a$ are the 
whole $2A$ conjugacy class.

Another example of a class of axial algebras with a different fusion law are algebras of Jordan 
type comprising Matsuo algebras, whose Miyamoto groups are 
$3$-transposition groups, and Jordan algebras, whose Miyamoto groups 
include classical groups and groups of exceptional Lie type $F_4$ and 
$G_2$. 

\begin{problem*}
For a given fusion law, which groups $G$ occur as the Miyamoto group of an axial algebra?
\end{problem*}

Seress \cite{seress} addressed this question for the class of 
Majorana algebras by developing an algorithm that computes, for a given 
$6$-transposition group $G$, possible $2$-closed Majorana algebras. (An 
axial algebra is $2$-closed if it is spanned by axes and by products of 
two axes.) He also provided a GAP implementation of his algorithm. 
However, his code was lost when he sadly died. Pfeiffer and Whybrow 
\cite{Maddycode} have recently developed a new and improved GAP 
implementation of Seress' algorithm.

%
%

In this paper we describe a new algorithm for addressing the second of 
the above questions and present results obtained using a {\sc magma} 
implementation \cite{ParAxlAlg, magma} of this algorithm. The new 
algorithm differs from Seress's algorithm in several key ways. Our 
algorithm works for a general axial algebra over an arbitrary field with 
any $T$-graded fusion law, rather than just for the Monster fusion law 
(which is $\mathbb{Z}_2$-graded) over $\mathbb{R}$. Crucially, we do not 
assume that the algebra is $2$-closed. Indeed we find quite a few 
examples that are not $2$-closed. We also do not assume that the algebra 
has an associating bilinear form (a \emph{Frobenius form}), whereas 
Seress assumes this and also that the form is positive definite. We do 
not assume the so-called 2Aa, 2Ab, 3A, 4A, 5A conditions (see 
\cite[page 314]{seress}) which restrict the configuration of the 
dihedral subalgebras. Finally, we do not require that the axes $a$ be in bijection with the axis subgroups $T_a$.

Let $\mathcal{F}$ be a $T$-graded fusion law and $G$ be a group acting 
on a set $X$.  We aim to build an axial algebra where the action on the 
axes by (a supergroup of) the Miyamoto group is given by the action of 
$G$ on $X$. In Section \ref{sec:shape} we rigorously define admissible 
$\tau$-maps and the shape of an algebra. Roughly speaking, 
$\tau\colon X\times T^*\to G_0\leq G$ is an \emph{admissible $\tau$-map} 
if it has the properties that the map $(a,\chi)\mapsto\tau_a(\chi)$ in 
an axial algebra has. The subgroup $G_0\unlhd G$ generated by the image 
of this map will be our Miyamoto group. The \emph{shape} is a choice 
of $2$-generated subalgebra for each pair of axes $a,b\in X$. Since the 
isomorphism class of $2$-generated subalgebras is preserved under 
automorphisms, in particular, under the action of the Miyamoto group, 
we need only make one choice for each conjugacy class of pairs of axes. 
In fact, there are some addition constraints on the shape given by 
containment of $2$-generated subalgebras in one another as described 
in Section \ref{sec:shape}. Our algorithm takes $\mathcal{F}$, $G$, $X$, 
$\tau$ and the shape as its input.  We show the following:

\begin{theorem*}
Suppose that the algorithm terminates and returns $A$. Then $A$ is a 
\textup{(}not necessarily primitive\textup{)} axial algebra generated 
by axes $X$ with Miyamoto group $G_0$, $\tau$-map $\tau$ and of the 
given shape.

Moreover, the algebra $A$ is universal. That is, given any other axial 
algebra $B$ with the same axes $X$, Miyamoto group $G_0$, $\tau$-map 
$\tau$ and shape, $B$ is a quotient of $A$.
\end{theorem*}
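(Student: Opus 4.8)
The plan is to deduce both assertions from an analysis of what the termination condition buys us. When the algorithm halts and returns $A$, the output is an algebra together with a distinguished spanning set on which \emph{every} local condition enforced by the expansion and reduction steps holds simultaneously; for otherwise one of those steps would still apply and produce a strictly larger (by a free expansion) or strictly smaller (by a reduction) algebra, contradicting termination. Spelling these out: $A$ is commutative and generated by the images of $X$; each $a\in X$ is an idempotent whose adjoint $\ad_a$ is semisimple with eigenvalues among the values of $\mathcal{F}$; the eigenspace decomposition $A=\bigoplus_{\lambda}A_\lambda(a)$ is a genuine direct sum satisfying the fusion containments $A_\lambda(a)\,A_\mu(a)\subseteq\sum_{\nu\in\lambda\ast\mu}A_\nu(a)$; every $g\in G$ acts on $A$ as an algebra automorphism, with the Miyamoto maps $\tau_a(\chi)$ realising $\tau$; and, for each pair $a,b\in X$, the subalgebra $\langle a,b\rangle$ is a quotient of the prescribed shape algebra. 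The first paragraph of the statement is then immediate: these are precisely the axioms of a (not necessarily primitive) axial algebra with fusion law $\mathcal{F}$; the $T$-grading of $\mathcal{F}$ forces each $\tau_a(\chi)\in\Aut(A)$; the image of $\tau$ generates $G_0$ by the definition of an admissible $\tau$-map; and the last invariant says exactly that $A$ has the given shape.

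For the universality statement I would carry a homomorphism into $B$ along the entire run of the algorithm. Record the computation as a finite chain of partial algebras $A^{(0)}\to A^{(1)}\to\dots\to A^{(n)}=A$ in which each arrow is either an \emph{expansion}, where $A^{(i+1)}$ is built from $A^{(i)}$ by freely adjoining new basis vectors indexed by selected products $uv$ of spanning vectors (and their $G$-translates), or a \emph{reduction}, where $A^{(i+1)}=A^{(i)}/I^{(i)}$ for the ideal $I^{(i)}$ generated by the relations imposed at that step. The invariant to maintain is: for each $i$ there is a homomorphism $\phi^{(i)}\colon A^{(i)}\to B$ of partial algebras carrying the axis generators of $A^{(i)}$ to the axes $X$ of $B$ and equivariant for $G$. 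The base case holds because $B$ is an axial algebra with axes $X$, $\tau$-map $\tau$ and the given shape, so the input data of the algorithm maps canonically into $B$. At an expansion step, extend $\phi^{(i)}$ by sending the new generator attached to a formal product $uv$ to $\phi^{(i)}(u)\,\phi^{(i)}(v)\in B$; this is well defined precisely because $B$ is an honest algebra in which all products exist, and $G$-equivariance is automatic. At a reduction step, $\phi^{(i)}$ descends to $A^{(i)}/I^{(i)}$ as soon as we know $\phi^{(i)}(I^{(i)})=0$, i.e.\ that every relation the algorithm imposes already holds in $B$. Finally $\phi:=\phi^{(n)}\colon A\to B$ is a homomorphism of algebras (both are now genuine algebras), its image contains $X$ and hence equals all of $B$, so $B$ is a quotient of $A$.

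The crux --- and the part I expect to require real care rather than mere bookkeeping --- is the reduction step in the universality argument: one must check, relation class by relation class, that nothing the algorithm ever quotients out is stronger than a logical consequence of the bundled axioms. Concretely this means verifying that the idempotency relations $a\cdot a=a$, the eigenvector relations arising from semisimplicity of $\ad_a$, the fusion-law containments, the automorphism relations $g(uv)=g(u)g(v)$ for $g\in G$, the Miyamoto-equivariance identities involving the $\tau_a(\chi)$, and the shape-gluing identifications are all valid in \emph{any} axial algebra carrying the data $(\mathcal{F},G,X,\tau,\text{shape})$ --- for if even one imposed relation were not forced, universality would fail for some $B$. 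The only other subtlety is structural rather than computational: because expansions genuinely enlarge the algebra, $A$ is not the quotient of a single free object, so the universal map has to be threaded through the alternation of expansions and reductions as above instead of produced in one stroke. Everything else --- commutativity, the grading bookkeeping, and the implication from termination to the invariants listed above --- is routine once the expansion/reduction formalism underlying the algorithm has been fixed.
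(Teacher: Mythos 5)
Your overall strategy is the one the paper uses, just written out in more detail. The first part matches the paper's observation that termination forces idempotency (each axis lies in its own $1$-eigenspace), semisimplicity (via Lemma~\ref{multiplydown} applied in Stage~2), the fusion law (imposed in Stage~2) and the correct Miyamoto automorphisms; for universality the paper likewise argues that every relation factored out must already hold in any $B$ carrying the same data, which is exactly what your inductive homomorphism $\phi^{(i)}\colon A^{(i)}\to B$ threaded through the expansion/reduction chain formalises.

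The one genuine gap is in the step you yourself flag as the crux but do not carry out: checking, relation class by relation class, that everything the algorithm quotients out is a consequence of the axioms. There is exactly one class where this fails, and the paper's precise formulation of the result (Theorem~\ref{algorithmthm}, as opposed to the version stated in the introduction) carries a proviso for precisely this reason. Stage~2 has an optional Step~3 which applies Lemma~\ref{wh-w} to add $u^g-u$ to $W_{1_T-1}$ for $g\in G_a$; the hypothesis of that lemma --- that $G_a$ fixes every vector of $A_1^a$ --- holds automatically for primitive axial algebras but not for general ones. If that optional step is used, the relations it generates need not hold in a non-primitive $B$, so at the corresponding reduction step $\phi^{(i)}(I^{(i)})$ need not vanish and your induction breaks; universality then holds only among primitive $B$. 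Your enumeration of relation classes (idempotency, eigenvector relations, fusion containments, equivariance, gluing identifications) omits exactly this one, which is the only delicate point. The fix is either to forbid the optional step, as the paper does, or to weaken the conclusion to universality among primitive algebras with the given data.
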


We find several new examples of axial algebra with the Monster fusion 
law. Some of these are $3$-closed examples (in fact we find some 
examples which are $5$-closed), but we also find many examples that do 
not satisfy the so-called M8-condition. This condition severely 
restricts the allowable intersections of certain dihedral subalgebras 
in the shape. We also see in our results several shapes which do not 
satisfy the 2Aa, 2Ab, 3A, 4A, 5A conditions (see 
Section \ref{sec:dihedral}), but still produce good axial 
algebras.

Interestingly, all the algebras we construct have a Frobenius form 
which is non-zero on the axes and invariant under the action of the 
Miyamoto group, even though we do not require this in our algorithm. 
Moreover, in all our examples, the form is positive semi-definite. It 
is known that axial algebras of Jordan type (those with three 
eigenvalues, $1$, $0$ and $\eta$) all have Frobenius forms and it 
has previously been observed that the other known examples also have 
Frobenius forms. Such a form, if it does exist, is uniquely 
determined by its values on the axes. So we make the following 
conjecture.

\begin{conjecture*}
All primitive axial algebras of Monster type admit a Frobenius form 
which is non-zero on the axes and invariant under the action of the 
Miyamoto group.
\end{conjecture*}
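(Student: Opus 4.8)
The plan is to separate the three requirements — existence, non-vanishing on the axes, and Miyamoto invariance — and to observe first that invariance is essentially automatic, so that the real content is existence. Suppose $\langle\cdot,\cdot\rangle$ is any associating (Frobenius) form on a primitive axial algebra $A$ of Monster type. Associativity forces distinct $\ad_a$-eigenspaces to be orthogonal: if $u\in A_\lambda(a)$ and $v\in A_\mu(a)$ then $\lambda\langle u,v\rangle=\langle au,v\rangle=\langle u,av\rangle=\mu\langle u,v\rangle$, so $\langle u,v\rangle=0$ whenever $\lambda\neq\mu$. Because the Monster fusion law is $\mathbb{Z}_2$-graded, the involution $\tau_a$ acts as $+1$ on the even part $A_1(a)\oplus A_0(a)\oplus A_{1/4}(a)$ and as $-1$ on the odd part $A_{1/32}(a)$; since the form pairs each of these parts only with itself, the two signs cancel and $\langle\tau_a u,\tau_a v\rangle=\langle u,v\rangle$ for all $u,v$. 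Hence the form is automatically $T_a$-invariant for every axis, and so invariant under the whole Miyamoto group $G_0$. It therefore suffices to construct a symmetric associating form that does not vanish on the axes.

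For existence I would build the form from its values on the axes, exploiting primitivity. Since $A_1(a)=\langle a\rangle$ is one-dimensional, every $x\in A$ has a well-defined projection $\phi_a(x)\,a$ onto $A_1(a)$ along the other eigenspaces, and the orthogonality above shows that any Frobenius form must satisfy $\langle a,x\rangle=\langle a,a\rangle\,\phi_a(x)$; this is exactly the uniqueness statement quoted in the excerpt, and it also dictates the candidate form. One sets $\langle a,b\rangle:=\langle a,a\rangle\,\phi_a(b)$, where $\langle a,a\rangle$ is fixed on each $G_0$-orbit (invariance forces it to be constant on orbits). The first thing to check is symmetry, $\langle a,a\rangle\,\phi_a(b)=\langle b,b\rangle\,\phi_b(a)$. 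This is a purely local question, depending only on the $2$-generated subalgebra $\langle\langle a,b\rangle\rangle$, so it can be settled using the classification of $2$-generated subalgebras of Monster type (Sakuma's theorem, as extended in \cite{IPSS,Axial1}): each of the nine types $1\mathrm{A},\dots,6\mathrm{A}$ carries an explicit Frobenius form, from which one both verifies the symmetry within a $G_0$-orbit and pins down the relative normalisations $\langle a,a\rangle:\langle b,b\rangle$ between orbits.

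The remaining, and decisive, step is to promote this symmetric pairing on the axes to a symmetric associating form on all of $A$ and to prove the associating identity $\langle uv,w\rangle=\langle u,vw\rangle$. As $A$ is generated by the axes and the identity is linear in each argument, one reduces to $u=a$ an axis and must match $\langle a,vw\rangle=\langle a,a\rangle\,\phi_a(vw)$ against $\langle av,w\rangle$ expanded through the $\ad_a$-eigenspace decomposition. I expect this to be the main obstacle. When $A$ is $2$-closed the products $vw$ stay within the span of axes and pairwise products, and the identity collapses to finitely many relations among the local Sakuma forms; but the whole point of the algorithm of this paper is that many axial algebras of Monster type are \emph{not} $2$-closed (some only $3$- or $5$-closed), and then the products $vw$ genuinely leave the $2$-generated data. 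Controlling these higher products — showing that the projection formula stays consistent as one multiplies across overlapping dihedral subalgebras — is precisely where the purely local information of Sakuma's theorem runs out, and is why the statement is only a conjecture.

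A concrete route to attack this last step, and to accumulate further evidence, is to work on the universal algebra $A$ returned by the algorithm, whose existence and universality are guaranteed by the theorem above. Because the shape, and hence all the values $\phi_a(b)$, are identical for $A$ and for each quotient $B$, it is enough to establish the associating form on each universal $A$; the expansion step then offers an inductive handle, as one attempts to extend the form over each newly introduced product and check that associativity is preserved, the radical of the resulting form accounting for the passage to primitive quotients. Verifying this inductive extension in the {\sc magma} implementation across all computed examples — all of which, as noted, do admit such a positive semi-definite form — would confirm the conjecture in every case the algorithm reaches, whereas a uniform proof would require a closure-free argument for the consistency of the projection pairing in the non-$2$-closed case.
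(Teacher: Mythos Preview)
The statement you are asked to prove is a \emph{conjecture} in the paper, not a theorem; the paper offers no proof of it whatsoever. The authors formulate it precisely because their computations produce a Frobenius form in every completed example, but they have no general argument. There is therefore nothing in the paper to compare your proposal against.

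Your write-up is honest about this: you correctly isolate the one substantive obstruction (extending the locally defined pairing $\langle a,x\rangle=\langle a,a\rangle\,\phi_a(x)$ to a globally associating form when $A$ is not $2$-closed) and you explicitly say this ``is why the statement is only a conjecture.'' So your text is not a proof and does not pretend to be one; it is a reasonable analysis of what a proof would require. The preliminary observations are sound --- orthogonality of eigenspaces under any associating form, and hence automatic Miyamoto invariance, are standard and appear in the paper (see the remark after the definition of Frobenius form and Lemma~\ref{formunique}). The symmetry step on pairs of axes via Sakuma's theorem is also the natural first move. But the decisive extension step remains genuinely open, and nothing in your outline closes it: appealing to the universal algebra produced by the algorithm does not help, since the algorithm is not known to terminate in general, and even when it does you would still need an argument independent of the particular expansion sequence. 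As a proof attempt this has a real gap, namely the same gap that makes the statement a conjecture.
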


The structure of the paper is as follows. In Section 
\ref{sec:background}, we define axial algebras and discuss various 
properties such as Miyamoto involutions and dihedral subalgebras.  
We define the shape of an algebra in Section \ref{sec:shape}. 
Section \ref{sec:preliminaries} gives some lemmas and further 
properties of axial algebras which we will need. Our main result 
is the algorithm which is described in Section \ref{sec:algorithm}. 
Finally, in Section \ref{sec:results}, we present examples computed 
by our {\sc magma} implementation of the algorithm.

\medskip

We thank Simon Peacock for some useful comments on an early draft 
of this paper.

\section{Background}\label{sec:background}

We will review the definition and some properties of axial algebras 
which were first introduced by Hall, Rehren and Shpectorov in 
\cite{Axial1}. We will pay particular attention to the motivating 
examples coming from the Monster sporadic finite simple group and 
also indicate the extra conditions for such an axial algebra to be 
a Majorana algebra.

\begin{definition}
Let $\mathbb{F}$ be a field, $\mathcal{F}\subseteq\mathbb{F}$ a 
subset, and 
$\star\colon\mathcal{F}\times\mathcal{F}\to 2^{\mathcal{F}}$ a 
symmetric binary operation. We call the pair 
$(\mathcal{F},\star)$ a \emph{fusion law over $\mathbb{F}$}. A 
single instance $\lambda\star\mu$ is called a \emph{fusion rule}.
\end{definition}

Abusing notation, we will often just write $\mathcal{F}$ for 
$(\mathcal{F},\star)$. We can also extend the operation $\star$ to 
subsets $I,J\subseteq\mathcal{F}$ in the obvious way: $I\star J$ is the 
union of all $\mu\star\nu$ for $\mu\in I$ and $\nu\in J$. We note that 
after extending the operation, $(2^\mathcal{F},\star)$ is closed 
and so is a commutative magma. We will further abuse notation and mix 
subsets and elements.

Let $A$ be a commutative non-associative (i.e.\ not-necessarily-associative) algebra over $\mathbb{F}$. For an element $a\in A$, 
the \emph{adjoint endomorphism} $\ad_a\colon A\to A$ is defined by 
$\ad_a(v):=av$, for all $v \in A$. Let $\Spec(a)$ be the set of 
eigenvalues of $\ad_a$, and for $\lambda\in\Spec(a)$, let 
$A_\lambda^a$ be the $\lambda$-eigenspace of $\ad_a$. Where the 
context is clear, we will write $A_\lambda$ for $A_\lambda^a$.  We 
will also adopt the convention that for subsets 
$I\subseteq\mathcal{F}$, $A_I:=\bigoplus_{\lambda\in I}A_\lambda$.

\begin{definition}\label{axialalgebra}
Let $(\mathcal{F},\star)$ be a fusion law over $\mathbb{F}$. An 
element $a\in A$ is an \emph{$\mathcal{F}$-axis} if the following 
hold:
\begin{enumerate}
\item $a$ is \emph{idempotent} (i.e.\ $a^2=a$);
\item $a$ is \emph{semisimple} (i.e.\ the adjoint $\ad_a$ is 
diagonalisable);
\item $\Spec(a)\subseteq\mathcal{F}$ and 
$A_\lambda A_\mu\subseteq A_{\lambda\star\mu}$ for all 
$\lambda,\mu\in\Spec(a)$.
\end{enumerate}
Furthermore, we say that the $\mathbb{F}$-axis $a$ is 
\emph{primitive} if $A_1=\langle a\rangle$.
\end{definition}

Note that, when $\Spec(a)\neq\mathcal{F}$, we can still talk of 
$A_\lambda^a$ for all $\lambda\in\mathcal{F}$: if 
$\lambda\notin\Spec(a)$ then $A_\lambda^a=0$. With this 
understanding, the last condition means that 
$A_\lambda A_\mu\subseteq A_{\lambda\star\mu}$ for all 
$\lambda,\mu\in\mathcal{F}$.

\begin{definition}
An \emph{$\mathcal{F}$-axial algebra} is a pair $(A,X)$ such that 
$A$ is a commutative non-associative algebra and $X$ is a set of 
$\mathcal{F}$-axes generating $A$. An axial algebra is 
\emph{primitive} if it is generated by primitive axes.
\end{definition}

Where the fusion law is clear from context, we will drop 
the $\mathcal{F}$ and simply use the term \emph{axis} and 
\emph{axial algebra}. Although an axial algebra has a distinguished generating set $X$, 
we will abuse the above notation and just write $A$ for the pair 
$(A,X)$.  Note that it has been usual in the literature 
to drop the adjective primitive and consider only primitive axial 
algebras.

The fusion law over $\mathbb{R}$ associated to the Monster is given 
by Table \ref{tab:monsterfusion}.
\begin{table}[!htb]
\setlength{\tabcolsep}{4pt}
\renewcommand{\arraystretch}{1.5}
\centering
\begin{tabular}{c||c|c|c|c}
 & $1$ & $0$ & $\frac{1}{4}$ & $\frac{1}{32}$ \\ \hline \hline
$1$ & $1$ &  & $\frac{1}{4}$ & $\frac{1}{32}$ \\ \hline
$0$ &  & $0$ &$\frac{1}{4}$ & $\frac{1}{32}$ \\ \hline
$\frac{1}{4}$ & $\frac{1}{4}$ & $\frac{1}{4}$ & $1, 0$ & $\frac{1}{32}$ \\ \hline
$\frac{1}{32}$ & $\frac{1}{32}$  & $\frac{1}{32}$ & $\frac{1}{32}$ & $1, 0, \frac{1}{4}$ 
\end{tabular}
\caption{Monster fusion law}\label{tab:monsterfusion}
\end{table}
This fusion law is exhibited by the so-called $2A$-axes in the Griess 
algebra. Indeed, noting that these generate the Griess algebra shows 
that it is an axial algebra. We say that an axial algebra is of 
\emph{Monster type} if it is an axial algebra with the Monster fusion 
law.

By definition, an axial algebra $A$ is spanned by products of the 
axes. We say that $A$ is \emph{$m$-closed} if $A$ is spanned by 
products of length at most $m$ in the axes.

\begin{definition}
A \emph{Frobenius form} on an axial algebra $A$ is a non-zero 
(symmetric) bilinear form 
$(\cdot,\cdot)\colon A\times A\to\mathbb{F}$ such that the form 
associates with the algebra product. That is, for all $x,y,z\in A$,
\[
(x,yz)=(xy,z).
\]
\end{definition}

We will be particularly interested in Frobenius forms such that 
$(a,a)\neq 0$, for all $a\in X$. That is, they are non-zero on the set 
of axes $X$. Note that an associating bilinear form on an axial algebra 
is necessarily symmetric \cite[Proposition 3.5]{Axial1}. Also, the 
eigenspaces for an axis in an axial algebra are perpendicular with 
respect to the Frobenius form.

\begin{lemma}\label{formunique}\textup{\cite[Lemma 4.17]{axialstructure}}
Suppose that $A$ is a primitive axial algebra admitting a Frobenius 
form. Then the form is uniquely determined by the values $(a,a)$ on the 
axes $a\in X$.
\end{lemma}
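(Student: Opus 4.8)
The plan is to exploit the fact that a primitive axis $a$ gives a direct sum decomposition $A = A_1^a \oplus A_0^a \oplus A_{1/4}^a \oplus A_{1/32}^a$ (or more generally $A = \bigoplus_{\lambda \in \mathcal{F}} A_\lambda^a$), together with the observation already recorded in the excerpt that distinct eigenspaces of an axis are perpendicular with respect to any Frobenius form. First I would fix a Frobenius form $(\cdot,\cdot)$ on $A$ and an axis $a \in X$, and show that the restriction of the form to each eigenspace $A_\lambda^a$ is determined by the single scalar $(a,a)$. For the $1$-eigenspace this is immediate from primitivity: $A_1^a = \langle a \rangle$, so the form there is literally the scalar $(a,a)$. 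For a general eigenvalue $\lambda \neq 0$, the key computation is the associativity identity applied to $a$ and two eigenvectors $u, v \in A_\lambda^a$: since $au = \lambda u$ we get $\lambda (u,v) = (au, v) = (a, uv)$, and $uv \in A_{\lambda \star \lambda}$ decomposes into components in eigenspaces, only the $A_1^a = \langle a \rangle$ component of which can pair nontrivially with $a$. Writing that component as $c(u,v)\, a$ for a scalar $c(u,v)$ depending bilinearly on $u,v$, we obtain $\lambda (u,v) = c(u,v)\,(a,a)$, so the form on $A_\lambda^a$ is $(a,a)/\lambda$ times a structure constant of the algebra — hence determined by $(a,a)$.

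Next I would handle the $0$-eigenspace, which is the one case where the trick $\lambda(u,v) = (a,uv)$ degenerates. Here I would instead use that $A_0^a$ is a subalgebra and that, since $a$ is an idempotent and the ambient algebra is generated by the axes $X$, every element of $A_0^a$ can be written using products of axes; more efficiently, I would invoke connectedness of the generating set under the Miyamoto action, or simply argue directly: for $b \in X$ another axis, the projection of $b$ onto $A_0^a$ lies in the dihedral subalgebra $\langle\langle a, b \rangle\rangle$, where one can compute the form explicitly from the $2$-generated structure, and the value $(b,b)$ of the form on the other axis is forced to equal $(a,a)$ because axes in a connected generating set are conjugate under the Miyamoto group (or are linked through a chain of dihedral subalgebras). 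Iterating, the form is pinned down on all of $A$ once $(a,a)$ is known, since $A$ is spanned by products of axes and the Frobenius (associativity) property lets one reduce the form on any product $(x, y_1 y_2 \cdots y_k)$ to forms involving fewer factors.

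I expect the main obstacle to be the $0$-eigenspace / propagation step: making rigorous the claim that the values $(a,a)$ for the various generating axes $a \in X$ are not independent but are all equal (or at least mutually determined). The clean way to see this is that within any dihedral subalgebra $\langle\langle a, b\rangle\rangle$ of Monster type one has an explicit basis and explicit structure constants (Sakuma's theorem / the classification of $2$-generated algebras), and the associativity of the form together with primitivity forces a linear relation between $(a,a)$ and $(b,b)$; in the generic (non-degenerate) $2$-generated case this relation is simply $(a,a) = (b,b)$, and one checks the remaining cases individually. Since the generating set $X$ is connected — any two axes are joined by a chain of $2$-generated subalgebras, as $A$ is generated by $X$ — transitivity of this relation shows all $(a,a)$ are determined by a single one, and hence the whole form is determined. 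A subtlety to flag is that Lemma \ref{formunique} as stated only asserts determination by the tuple $((a,a))_{a \in X}$, not that these values are equal, so strictly I only need the eigenspace argument of the first paragraph plus the reduction of products via the Frobenius identity; the connectedness argument is what upgrades this to the stronger-sounding statement and is the part requiring the most care.
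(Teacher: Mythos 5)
The paper does not prove this lemma itself; it quotes it from \cite[Lemma 4.17]{axialstructure}, where the argument is a short induction: since $A$ is spanned by (bracketed) products of axes, associativity $(xy,z)=(x,yz)$ reduces any value $(u,v)$, with $u$ a product of length $k\geq 2$, to a value whose first argument is a shorter product, and ultimately to $(a,z)$ for an axis $a$ and some $z\in A$; then primitivity and the perpendicularity of distinct eigenspaces give $(a,z)=\phi_a(z)\,(a,a)$, where $\phi_a(z)$ is the coefficient of $a$ in the $A_1^a$-component of $z$, a quantity determined by the algebra alone. You do in fact state both of these ingredients --- the projection formula in your first paragraph and the reduction of $(x,y_1y_2\cdots y_k)$ by associativity in your closing aside --- so the correct proof is present in your write-up, but only as a parenthetical remark rather than as the main line of argument.

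The main line you actually develop has a genuine gap. Determining the form block-by-block on the eigenspaces $A_\lambda^a$ fails precisely on $A_0^a\times A_0^a$, as you note, and the patch you propose does not work: it is not true in general that the values $(a,a)$ for different axes are equal or even linked. Already in the $2\mathrm{B}$ algebra one has $a_0a_1=0$, so $(a_0,a_1)=(a_0^2,a_1)=(a_0,a_0a_1)=0$ and the two values $(a_0,a_0)$, $(a_1,a_1)$ are completely independent; more generally the lemma deliberately allows the tuple $\bigl((a,a)\bigr)_{a\in X}$ to have distinct entries, so no connectedness argument can reduce to a single scalar. Moreover, even granting relations among the $(a,a)$, this would not determine the form on vectors of $A_0^a$ that are not projections of axes. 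The fix is simply to discard the eigenspace-by-eigenspace strategy and promote your aside to the proof: induct on the length of a product of axes using the Frobenius identity, terminating with $(a,z)=\phi_a(z)(a,a)$. That argument needs no case analysis, no classification of $2$-generated subalgebras, and no claim about the values $(a,a)$ beyond treating them as the given data.
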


Majorana algebras were introduced by Ivanov by generalising certain 
properties found in subalgebras of the Griess algebra \cite{I09}. Axial 
algebras were developed as a generalisation of Majorana algebras, so 
Majorana algebras can be thought of as the precursor of axial algebras. 
As such, we can give a definition of them in terms of axial algebras.

\begin{definition}
A \emph{Majorana algebra} is a primitive axial algebra $A$ of Monster 
type over $\mathbb{R}$ such that
\begin{enumerate}
\item[M$1$] $A$ has a positive definite Frobenius form $(\cdot,\cdot)$; 
furthermore, $(a,a)=1$ for every axis $a$.
\item[M$2$] \emph{Norton's inequality} holds. That is, for all 
$x,y\in A$,
\[
(x\cdot y,x\cdot y)\leq(x\cdot x,y\cdot y).
\]
\end{enumerate}
\end{definition}

In some papers, the M2 axiom is not assumed and in others additional 
axioms on the subalgebras are assumed such as the $\textup{M}8$ axiom, 
which we will explain later in Section \ref{sec:dihedral}.

\subsection{Gradings and automorphisms}

The key property that axial algebras and Majorana algebras generalise 
from the Griess algebra is that there is a natural link between 
involutory automorphisms and axes.  This link occurs precisely when 
we have a graded fusion law.

\begin{definition}
The fusion law $\mathcal{F}$ is \emph{$T$-graded}, where $T$ is a 
finite abelian group, if $\mathcal{F}$ has a partition 
$\mathcal{F}=\cup_{t\in T}\mathcal{F}_t$ such that
\[
\mathcal{F}_s\star\mathcal{F}_t\subseteq\mathcal{F}_{st}
\]
for all $s,t\in T$.
\end{definition}

Note that, in the same way as we allow trivial eigenspaces, we also 
allow empty parts in the partition in the above definition.

Let $A$ be an algebra and $a\in A$ an $\mathcal{F}$-axis (we do not 
require $A$ to be an axial algebra here). If $\mathcal{F}$ is 
$T$-graded, then this induces a \emph{$T$-grading} on $A$ with 
respect to the axis $a$. The weight $t$ subspace $A_t$ of $A$ is
\[
A_t=A_{\mathcal{F}_t}=\bigoplus_{\lambda\in\mathcal{F}_t}A_\lambda.
\]

This leads to automorphisms of the algebra. Let $T^*$ denote the 
group of linear characters of $T$. That is, the homomorphisms from 
$T$ to $\mathbb{F}^\times$. For $\chi\in T^*$, we define a map 
$\tau_a(\chi)\colon A\to A$ by
\[
v\mapsto\chi(t) v
\]
for $v\in A_t$ and extend linearly to $A$. Since $A$ is $T$-graded, 
this map $\tau_a(\chi)$ is an automorphism of $A$. Furthermore, the 
map sending $\chi$ to $\tau_a(\chi)$ is a homomorphism from $T^*$ 
to $\Aut(A)$. The subgroup $T_a:=\mathrm{Im}(\tau_a)$ of $\Aut(A)$ 
is called the \emph{axis subgroup} corresponding to $a$.

We are particularly interested in $\mathbb{Z}_2$-graded fusion laws. 
In this case, we write $\mathbb{Z}_2$ as $\{+,-\}$ with the usual 
multiplication of signs. For example, the Monster fusion law 
$\mathcal{F}$ is $\mathbb{Z}_2$-graded where 
$\mathcal{F}_+=\{1,0,\frac{1}{4}\}$ and 
$\mathcal{F}_-=\{\frac{1}{32}\}$. 

When the fusion law is $\mathbb{Z}_2$-graded and 
$\mathrm{char}(\mathbb{F})\neq 2$, $T^*=\{\chi_1,\chi_{-1}\}$, 
where $\chi_1$ is the trivial character and $\chi_{-1}$ is the 
alternating character of $T=\mathbb{Z}_2$. Here the axis subgroup 
contains just one non-trivial automorphism, 
$\tau_a:=\tau_a(\chi_{-1})$.  We call this the 
\emph{axial involution}, or \emph{Miyamoto involution}, associated 
to $a$. It is given by the linear extension of
\[
v^{\tau_a}=\begin{cases}
v&\mbox{if }v\in A_+;\\
-v&\mbox{if }v\in A_-.
\end{cases}
\]

Let $Y\subseteq X$ be a set of axes in $A$. We define
\[
G(Y):=\langle T_a:a\in Y\rangle.
\]
We call $G(X)$ the \emph{Miyamoto group}.

For a subset $Y\subseteq X$ of axes, we define $\overline{Y}=Y^{G(Y)}$. 
By \cite[Lemma 3.5]{axialstructure}, $G(\overline{Y})=G(Y)$ and so 
$\overline{Y}^{G(\overline{Y})}=\overline{Y}$. We call $\overline{Y}$ 
the \emph{closure} of $Y$ and we say that $Y$ is \emph{closed} if 
$Y=\overline{Y}$.

\begin{table}[p]
\setlength{\tabcolsep}{4pt}
\renewcommand{\arraystretch}{1.5}
\centering
\footnotesize
\begin{tabular}{c|c|c}
Type & Basis & Products \& form \\ \hline
$2\textrm{A}$ & \begin{tabular}[t]{c} $a_0$, $a_1$, \\ $a_\rho$ 
\end{tabular} & 
\begin{tabular}[t]{c}
$a_0 \cdot a_1 = \frac{1}{8}(a_0 + a_1 - a_\rho)$ \\
$a_0 \cdot a_\rho = \frac{1}{8}(a_0 + a_\rho - a_1)$ \\
$(a_0, a_1) = (a_0, a_\rho)= (a_1, a_\rho) = \frac{1}{8}$
\vspace{4pt}
\end{tabular}
\\
$2\textrm{B}$ & $a_0$, $a_1$ &
\begin{tabular}[t]{c}
$a_0 \cdot a_1 = 0$ \\
$(a_0, a_1) = 0$
\vspace{4pt}
\end{tabular}
\\
$3\textrm{A}$ & \begin{tabular}[t]{c} $a_{-1}$, $a_0$, \\ $a_1$, $u_\rho$ \end{tabular} &
\begin{tabular}[t]{c}
$a_0 \cdot a_1 = \frac{1}{2^5}(2a_0 + 2a_1 + a_{-1}) - \frac{3^3\cdot5}{2^{11}} u_\rho$ \\
$a_0 \cdot u_\rho = \frac{1}{3^2}(2a_0 - a_1 - a_{-1}) + \frac{5}{2^{5}} u_\rho$ \\
$u_\rho \cdot u_\rho = u_\rho$, $(a_0, a_1) = \frac{13}{2^8}$ \\
$(a_0, u_\rho) = \frac{1}{4}$, $(u_\rho, u_\rho) = \frac{2^3}{5}$ 
\vspace{4pt}
\end{tabular}
\\
$3\textrm{C}$ & \begin{tabular}[t]{c} $a_{-1}$, $a_0$, \\ $a_1$ \end{tabular} &
\begin{tabular}[t]{c}
$a_0 \cdot a_1 = \frac{1}{2^6}(a_0 + a_1 - a_{-1})$ \\
$(a_0, a_1) = \frac{1}{2^6}$
\vspace{4pt}
\end{tabular}
\\
$4\textrm{A}$ & \begin{tabular}[t]{c} $a_{-1}$, $a_0$, \\ $a_1$, $a_2$ \\ $v_\rho$ \end{tabular} &
\begin{tabular}[t]{c}
$a_0 \cdot a_1 = \frac{1}{2^6}(3a_0 + 3a_1 - a_{-1} - a_2 - 3v_\rho)$ \\
$a_0 \cdot v_\rho = \frac{1}{2^4}(5a_0 - 2a_1 - a_2 - 2a_{-1} + 3v_\rho)$ \\
$v_\rho \cdot v_\rho = v_\rho$, $a_0 \cdot a_2 = 0$ \\
$(a_0, a_1) = \frac{1}{2^5}$, $(a_0, a_2) = 0$\\
$(a_0, v_\rho) = \frac{3}{2^3}$, $(v_\rho, v_\rho) = 2$
\vspace{4pt}
\end{tabular}
\\
$4\textrm{B}$ & \begin{tabular}[t]{c} $a_{-1}$, $a_0$, \\ $a_1$, $a_2$ \\ $a_{\rho^2}$ \end{tabular} &
\begin{tabular}[t]{c}
$a_0 \cdot a_1 = \frac{1}{2^6}(a_0 + a_1 - a_{-1} - a_2 + a_{\rho^2})$ \\
$a_0 \cdot a_2 = \frac{1}{2^3}(a_0 + a_2 - a_{\rho^2})$ \\
$(a_0, a_1) = \frac{1}{2^6}$, $(a_0, a_2) = (a_0, a_{\rho^2})= \frac{1}{2^3}$
\vspace{4pt}
\end{tabular}
\\
$5\textrm{A}$ & \begin{tabular}[t]{c} $a_{-2}$, $a_{-1}$,\\ $a_0$, $a_1$,\\ $a_2$, $w_\rho$ \end{tabular} &
\begin{tabular}[t]{c}
$a_0 \cdot a_1 = \frac{1}{2^7}(3a_0 + 3a_1 - a_2 - a_{-1} - a_{-2}) + w_\rho$ \\
$a_0 \cdot a_2 = \frac{1}{2^7}(3a_0 + 3a_2 - a_1 - a_{-1} - a_{-2}) - w_\rho$ \\
$a_0 \cdot w_\rho = \frac{7}{2^{12}}(a_1 + a_{-1} - a_2 - a_{-2}) + \frac{7}{2^5}w_\rho$ \\
$w_\rho \cdot w_\rho = \frac{5^2\cdot7}{2^{19}}(a_{-2} + a_{-1} + a_0 + a_1 + a_2)$ \\
$(a_0, a_1) = \frac{3}{2^7}$, $(a_0, w_\rho) = 0$, $(w_\rho, w_\rho) = \frac{5^3\cdot7}{2^{19}}$
\vspace{4pt}
\end{tabular}
\\
$6\textrm{A}$ & \begin{tabular}[t]{c} $a_{-2}$, $a_{-1}$,\\ $a_0$, $a_1$,\\ $a_2$, $a_3$ \\ $a_{\rho^3}$, $u_{\rho^2}$ \end{tabular} &
\begin{tabular}[t]{c}
$a_0 \cdot a_1 = \frac{1}{2^6}(a_0 + a_1 - a_{-2} - a_{-1} - a_2 - a_3 + a_{\rho^3}) + \frac{3^2\cdot5}{2^{11}}u_{\rho^2}$ \\
$a_0 \cdot a_2 = \frac{1}{2^5}(2a_0 + 2a_2 + a_{-2}) - \frac{3^3\cdot5}{2^{11}}u_{\rho^2}$ \\
$a_0 \cdot u_{\rho^2} = \frac{1}{3^2}(2a_0 - a_2 + a_{-2}) + \frac{5}{2^5}u_{\rho^2}$ \\
$a_0 \cdot a_3 = \frac{1}{2^3}(a_0 + a_3 - a_{\rho^3})$, $a_{\rho^3} \cdot u_{\rho^2} = 0$\\
$(a_0, a_1) = \frac{5}{2^8}$, $(a_0, a_2) = \frac{13}{2^8}$ \\
$(a_0, a_3) = \frac{1}{2^3}$, $(a_{\rho^3}, u_{\rho^2}) = 0$,
\end{tabular}
\end{tabular}
\caption{Norton-Sakuma algebras}\label{tab:sakuma}
\end{table}

\subsection{Subalgebras generated by two axes}\label{sec:dihedral}

Since the defining property of axial algebras is that they are generated 
by a set of axes, it is natural to ask: What are the axial algebras that 
are generated by just two axes?  We call such axial algebras 
\emph{$2$-generated} and, if the fusion law is $\mathbb{Z}_2$-graded, we 
also call them \emph{dihedral} because the Miyamoto group in this case 
is dihedral. 

In the Griess algebra, the dihedral subalgebras, called 
\emph{Norton-Sakuma algebras}, were investigated by Norton and shown to 
be one of nine different types \cite{C85}.  In particular, for each pair 
of axes $a_0$, $a_1$ in the Griess algebra, the isomorphism class of the 
subalgebra which they generate is determined by the conjugacy class in 
the Monster of the product $\tau_{a_0} \tau_{a_1}$ of the two 
involutions $\tau_{a_0}$ and $\tau_{a_1}$ associated to the axes. The 
nine different types are: $1\textup{A}$ (when $a_0=a_1$), 
$2\textup{A}$, $2\textup{B}$, $3\textup{A}$, $3\textup{C}$, 
$4\textup{A}$, $4\textup{B}$, $5\textup{A}$ and $6\textup{A}$.

The algebra $1\textup{A}$ is just one dimensional, but the remaining 
eight Norton-Sakuma algebras are given in Table \ref{tab:sakuma} whose 
content we will now explain.  The notation is from 
\cite[Section 2]{seress}.  Let $n\textrm{L}$ be one of the dihedral 
algebras.  Since its generating axes $a_0$ and $a_1$ give involutions 
$\tau_{a_0}$ and $\tau_{a_1}$ in the Monster, we have the dihedral group 
$D_{2n} \cong \langle \tau_{a_0}, \tau_{a_1} \rangle$ acting as 
automorphisms of $n\textrm{L}$ (possibly with a kernel).  In particular, 
let $\rho = \tau_{a_0}\tau_{a_1}$.  We define
\[
a_{\epsilon + 2k} = a_\epsilon^{\rho^k}
\]
for $\epsilon = 0,1$.  It is clear that these $a_i$ are all axes as they 
are conjugates of $a_0$ or $a_1$.  The orbits of $a_0$ and $a_1$ under 
the action of $\rho$ (in fact, under the action of $D_{2n}$) have the 
same size.  If $n$ is even, then these two orbits have size $\frac{n}{2}$ 
and are disjoint, whereas if $n$ is odd, the orbits coincide and have 
size $n$. The map $\tau$ associates an involution to each axis $a$ and 
$\tau_a^g = \tau_{a^g}$ for all $g \in \Aut(n\textrm{L})$.  In almost all 
cases, the axes $a_i$ are not enough to span the algebra.  We index the 
additional basis elements by powers of $\rho$.  Using the action of 
$D_{2n}$, it is enough to just give the products in Table 
\ref{tab:sakuma} to fully describe each algebra.  The axes in each 
algebra are primitive and each algebra admits a Frobenius form that is
 non-zero on the set of axes and invariant under the Miyamoto group; the 
 values for this are also listed in the table.

Amazingly the classification of dihedral algebras also holds, and is 
known as Sakuma's theorem \cite{sakuma}, if we replace the Griess algebra 
by the weight two subspace $V_2$ of a vertex operator algebra (VOA) 
$V = \bigoplus_{n = 0}^\infty V_n$ over $\mathbb{R}$ where 
$V_0 = \mathbb{R} 1$ and $V_1 = 0$ (those of OZ-type).  After Majorana 
algebras were defined generalising such VOAs, the result was reproved for 
Majorana algebras by Ivanov, Pasechnik, Seress and Shpectorov in 
\cite{IPSS}.  In the paper introducing axial algebras, the result was 
also shown to hold in axial algebras of Monster type over a field of 
characteristic $0$ which have a Frobenius form \cite{Axial1}.  It is 
conjectured that the Frobenius form is not required.

\begin{conjecture}\label{conj:dihedral}
A dihedral axial algebra of Monster type over a field of characteristic 
$0$ is one of the nine Norton-Sakuma algebras.\footnote{A proof of this 
conjecture was recently announced by Franchi, Mainardis and Shpectorov 
at the Axial Algebra Focused Workshop in Bristol in May 2018.}
\end{conjecture}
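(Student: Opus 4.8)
The plan is to classify dihedral axial algebras of Monster type from scratch, working purely with the algebra structure: the existing proofs in \cite{IPSS} and \cite{Axial1} rely on a Frobenius form, and the point of the announced result is to dispense with it. So let $A$ be generated by two primitive axes $a = a_0$ and $b = a_1$ with the fusion law of Table~\ref{tab:monsterfusion}, over a field $\mathbb{F}$ with $\mathrm{char}\,\mathbb{F} = 0$; I may assume $\mathbb{F}$ is algebraically closed (the reduction is standard, since the fusion law and every algebra in Table~\ref{tab:sakuma} are defined over $\mathbb{Q}$). Write $\tau_0 = \tau_a$, $\tau_1 = \tau_b$ for the Miyamoto involutions, $D = \langle\tau_0,\tau_1\rangle$, $\rho = \tau_0\tau_1$, and set $a_{2k} = a_0^{\rho^k}$, $a_{2k+1} = a_1^{\rho^k}$; these are again primitive axes, and $U := \langle a_i : i\in\mathbb{Z}\rangle$ is $D$-invariant. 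The first phase is to show that $A$ is finite-dimensional and is spanned by $U$ together with at most two further vectors.

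The dimension bound is the crux. Since $\ad_a$ is semisimple with $\Spec(a)\subseteq\{1,0,\tfrac14,\tfrac1{32}\}$, applying the Vandermonde matrix of $(1,0,\tfrac14,\tfrac1{32})$ — invertible because $\mathrm{char}\,\mathbb{F}=0$ — to the list $b,\ ab,\ a(ab),\ a(a(ab))$ recovers, as honest elements of $A$, the four $\ad_a$-eigencomponents $\lambda a + b_0 + b_{1/4} + b_{1/32}$ of $b$, with $b_\mu\in A_\mu^a$ and $\lambda\in\mathbb{F}$ (and if $b_{1/32}=0$ then $\tau_a$ fixes $b$, hence all of $A$, and one is in a degenerate case). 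The $\mathbb{Z}_2$-grading $\mathcal{F}_-=\{\tfrac1{32}\}$ together with the rules $\tfrac1{32}\star\tfrac1{32}\subseteq\{1,0,\tfrac14\}$ and $\mu\star\tfrac1{32}=\{\tfrac1{32}\}$ for $\mu\in\{1,0,\tfrac14\}$ shows that $A = A_+^a\oplus A_{1/32}^a$ is $\mathbb{Z}_2$-graded with grading automorphism $\tau_a$, that $A_+^a$ is a subalgebra on which the Jordan-type fusion law $\{1,0,\tfrac14\}$ holds (strongly restricting its structure), and — using commutativity — that $A_{1/32}^a$ is the $A_+^a$-submodule generated by the single vector $b_{1/32}$. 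An involved but elementary analysis of these fusion rules, the form-free substitute for the Gram-matrix and resultant arguments of \cite{IPSS,Axial1}, should then force $\dim A_{1/32}^a\le 1$ and, applying the same method inside $A_+^a$ via $\tfrac14\star\tfrac14=\{1,0\}$, force $A_+^a$ to be spanned by the $a_i$ together with at most two ``$u_\rho$''/``$v_\rho$''-type vectors; combining, $\dim A\le 8$.

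With finite-dimensionality secured, the classification reduces to solving a finite polynomial system. The structure constants of $A$ relative to the spanning set above are subject to $D$-equivariance (so only finitely many are free), idempotency and primitivity of every $a_i$, the Monster fusion law at $a_0$, and the requirement that $\rho$ be an automorphism. Feeding idempotency and the fusion law into the action of $\rho$ on $U$ shows that the $a_i$ satisfy a fixed linear recursion, forcing $\rho$ to have finite order; requiring every $a_i$ to be idempotent and the fusion law at $a_0$ to hold then confines that order to $n\in\{1,2,3,4,5,6\}$ — the Sakuma constraint — eigenvalues of $\rho|_U$ that are not roots of unity being excluded for the same reasons. For each such $n$ the residual equations have only finitely many solutions, and solving them produces precisely the algebras $5\mathrm{A}$, $6\mathrm{A}$, the pairs $2\mathrm{A}$/$2\mathrm{B}$, $3\mathrm{A}$/$3\mathrm{C}$, $4\mathrm{A}$/$4\mathrm{B}$, and the one-dimensional $1\mathrm{A}$ of Table~\ref{tab:sakuma}, and nothing else.

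The decisive obstacle is the form-free dimension bound, that is, showing $A_{1/32}^a$ and, inside $A_+^a$, the $\tfrac14$-eigenspace are small. With a non-degenerate Frobenius form one gets orthogonality of distinct $\ad_a$-eigenspaces for free and can then run standard Gram-matrix computations, as in \cite{IPSS,Axial1}; without a form — and since the algebra is non-associative, so brackets may not be reassociated — the necessary scalar identities must instead be extracted from iterated applications of the Monster fusion rules to products of $b_0$, $b_{1/4}$, $b_{1/32}$ and their $\rho$-conjugates. Pushing this through is the technical heart of the argument; by comparison the polynomial-system step and the characteristic-$0$ bookkeeping (ensuring $\tfrac1{32}\ne 0$, $\tfrac14\ne\tfrac1{32}$, and invertibility of the Vandermonde) are routine.
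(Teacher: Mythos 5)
First, a point of order: the paper does not prove this statement at all --- it is stated as Conjecture~\ref{conj:dihedral}, with a footnote recording that a proof was announced elsewhere by Franchi, Mainardis and Shpectorov. So there is no proof in the paper to compare against, and your text has to stand on its own as a proof. It does not: it is a plan in which the decisive step is explicitly deferred. You write that an ``involved but elementary analysis of these fusion rules \emph{should} then force'' the dimension bounds, and later concede that ``pushing this through is the technical heart of the argument.'' That heart is exactly what a proof of the conjecture would have to supply; everything else (the Vandermonde recovery of eigencomponents, the $\mathbb{Z}_2$-grading, the reduction to a finite polynomial system once a spanning set is fixed) is the standard frame already present in \cite{IPSS} and \cite{Axial1}. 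Likewise the assertion that the $a_i$ satisfy a linear recursion forcing $\rho$ to have order at most $6$ is not a routine consequence of idempotency --- it is essentially the $6$-transposition property, i.e.\ the substance of Sakuma's theorem, and cannot be waved through.

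Worse, the intermediate bound you propose to prove is false, so no amount of fusion-rule analysis will deliver it. You claim the argument should force $\dim A_{1/32}^a\le 1$. But $A_{1/32}^a=A_-^a$ is precisely the $(-1)$-eigenspace of $\tau_a$, and in the $6\mathrm{A}$ algebra of Table~\ref{tab:sakuma} the involution $\tau_{a_0}$ swaps $a_1\leftrightarrow a_{-1}$ and $a_2\leftrightarrow a_{-2}$, so $A_{1/32}^{a_0}$ contains the two independent vectors $a_1-a_{-1}$ and $a_2-a_{-2}$; the same happens in $5\mathrm{A}$. Since $5\mathrm{A}$ and $6\mathrm{A}$ are among the nine algebras you are trying to characterise, a lemma forcing $\dim A_{1/32}^a\le 1$ would disprove the conjecture rather than prove it. The correct statement has to be module-theoretic (e.g.\ a bound on the number of $A_+^a$-module generators of $A_-^a$, or on $\dim A_{1/4}^a$ and $\dim A_{1/32}^a$ separately by case), and establishing any such bound without a Frobenius form is precisely the open difficulty. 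As written, the proposal contains a genuine gap at its central step and an incorrect target for that step.
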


For Majorana algebras, the following axiom is also often assumed.

\begin{enumerate}
\item[$\mathrm{M}8$] Let $a_i \in X$ be axes for $0 \leq i \leq 2$.  If 
$a_0$ and $a_1$ generate a dihedral subalgebra of type $2\mathrm{A}$, 
then $a_\rho \in X$ and $\tau_{a_\rho} = \tau_{a_0}\tau_{a_1}$.  
Conversely, if $\tau_{a_0}\tau_{a_1}\tau_{a_2}=1$, then $a_0$ and $a_1$ 
generate a dihedral subalgebra of type $2\mathrm{A}$ and $a_2 = a_\rho$.
\end{enumerate}

This severely restricts the possible configuration of subalgebras. We 
will explain this later in Section \ref{sec:shape} once we have 
introduced shapes.

Seress \cite{seress} also assumed that the map $\tau$ was a bijection 
between the set of axes $X$ and a union of conjugacy classes of 
involutions in $G$.  Moreover the following conditions which restrict 
the intersections of subalgebras were also assumed.  Let 
$a_i, b_i \in X$ and $\rho(a_0, a_1) = \tau_{a_0} \tau_{a_1}$.

\begin{enumerate}
\item[$2\mathrm{Aa}$] If $\tau_{a_0} \tau_{a_1} \tau_{a_2} = 1$ and 
$\langle a_0, a_1 \rangle \cong 2\A$, then 
$a_2 \in \langle a_0, a_1 \rangle$ and $a_2 = a_{\rho}$.

\item[$2\mathrm{Ab}$] If $\langle a_0, a_1 \rangle$ and 
$\langle b_0, b_1 \rangle$ are both of type $2\A$ and 
$\langle \rho(a_0, a_1) \rangle = \langle \rho(b_0, b_1) \rangle$, 
then the extra basis elements $a_\rho(a_0, a_1)$ and 
$a_\rho(b_0, b_1)$ are equal.

\item[$3\A$] If $\langle a_0, a_1 \rangle$ and 
$\langle b_0, b_1 \rangle$ are both of type $3\A$ and 
$\langle \rho(a_0, a_1) \rangle = \langle \rho(b_0, b_1) \rangle$, 
then the extra basis elements $u_\rho(a_0, a_1)$ and 
$u_\rho(b_0, b_1)$ are equal.

\item[$4\A$] If $\langle a_0, a_1 \rangle$ and 
$\langle b_0, b_1 \rangle$ are both of type $4\A$ and 
$\langle \rho(a_0, a_1) \rangle = \langle \rho(b_0, b_1) \rangle$, 
then the extra basis elements $v_\rho(a_0, a_1)$ and 
$v_\rho(b_0, b_1)$ are equal.

\item[$5\A$] If $\langle a_0, a_1 \rangle$ and 
$\langle b_0, b_1 \rangle$ are both of type $5\A$ and 
$\langle \rho(a_0, a_1) \rangle = \langle \rho(b_0, b_1) \rangle$, 
then the extra basis elements $w_\rho(a_0, a_1)$ and 
$w_\rho(b_0, b_1)$ are equal up to a change of sign.
\end{enumerate}

We can also consider a wider class of axial algebras.  Axial 
algebras of Jordan type $\eta$ were considered in \cite{Axial2}.  
Here there are just three eigenvalues, $1$, $0$ and $\eta$.  When 
$\eta \neq \frac{1}{2}$, all algebras were classified and they 
relate to $3$-transposition groups.  The \emph{Ising fusion law} 
$\Phi(\alpha, \beta)$ is given in Table \ref{tab:Ising}.
\begin{table}[!htb]
\setlength{\tabcolsep}{4pt}
\renewcommand{\arraystretch}{1.5}
\centering
\begin{tabular}{c||c|c|c|c}
 & $1$ & $0$ & $\alpha$ & $\beta$ \\ \hline \hline
$1$ & $1$ &  & $\alpha$ & $\beta$ \\ \hline
$0$ &  & $0$ &$\alpha$ & $\beta$ \\ \hline
$\alpha$ & $\alpha$ & $\alpha$ & $1, 0$ & $\beta$ \\ \hline
$\beta$ & $\beta$  & $\beta$ & $\beta$ & $1, 0, \alpha$ 
\end{tabular}
\caption{Ising fusion law $\Phi(\alpha, \beta)$}\label{tab:Ising}
\end{table}

In particular, note that the Monster fusion law is just 
$\Phi(\frac{1}{4}, \frac{1}{32})$.  In \cite{felix}, Rehren studies 
dihedral axial algebras over $\Phi(\alpha, \beta)$ with a Frobenius 
form and shows that the nine algebras above can be generalised and 
live in families which exist for values of $\alpha$ and $\beta$ 
lying in certain varieties.  It turns out that 
$(\alpha, \beta) = (\frac{1}{4}, \frac{1}{32})$ is a distinguished 
point.


\section{Shapes}\label{sec:shape}

The shape of an axial algebra $A$ specifies which $2$-generated 
subalgebras arise in $A$. Clearly, a precondition for such a 
description is the knowledge of the possible $2$-generated 
algebras; that is, for the class of axial algebras under 
consideration we either should have classified all $2$-generated 
algebras or, minimally, we should have an explicit list of such
algebras that we want to allow in $A$. 

Note that the 2-generated algebras should be classified not up to 
an abstract algebra isomorphism, but rather up to the (unique 
possible) isomorphism sending the two generating axes of one 
algebra to the two generating axes of the other algebra. That is, 
we consider the $2$-generated algebras as having marked generators 
and isomorphisms must respect them: if $B$ has marked generators 
$a$ and $b$ and $B'$ has marked generators $a'$ and $b'$ then 
$(B,(a,b))$ is isomorphic to $(B',(a',b'))$ only if there is an 
isomorphism $\phi \colon B\to B'$ such that $\phi(a)=a'$ and $\phi(b)=b'$.
In principle, an algebra may have non-equivalent pairs of 
generators and then this algebra must accordingly appear on the 
list several times. Note that for algebras of Monster type, Sakuma 
theorem classifies dihedral algebras exactly in this sense: in each 
of the eight Norton-Sakuma algebra the marked generators are $a_0$ 
and $a_1$ and any other pairs of generators is equivalent to 
$(a_0,a_1)$.  Therefore, in order to motivate the general case, we consider first the case of an axial algebra of Monster type.


Let $A$ be an axial algebra of Monster type and suppose that $X$ is a set of axes which generates $A$.  Note that by enlarging our set $X$, we may assume that $X$ is closed under the action of the Miyamoto group $G$ of $A$.

\begin{lemma}\label{Gfaithful}
The action of $G$ on $X$ is faithful.
\end{lemma}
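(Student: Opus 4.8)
The plan is to reduce faithfulness of the permutation action of $G$ on $X$ to the triviality of any automorphism of $A$ that fixes a generating set pointwise. First I would note that, by construction, $G = G(X) = \langle T_a : a \in X \rangle$ is a subgroup of $\Aut(A)$, and that replacing $X$ by its closure $\overline{X}$ (which by \cite[Lemma 3.5]{axialstructure} leaves $G$ unchanged and still generates $A$, since $\overline{X}\subseteq A$) makes $X$ invariant under $G$, so that the permutation action is defined in the first place. Let $K \unlhd G$ be the kernel of this action.

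The heart of the argument is then immediate: an element $g \in K$ is, in particular, an automorphism of $A$ satisfying $a^g = a$ for all $a \in X$. Since $g$ is linear and commutes with the algebra product, it fixes every product of elements of $X$, hence fixes the linear span of all such products; but $X$ generates $A$, so this span is all of $A$. Thus $g = \id_A$, so $K = 1$ and the action of $G$ on $X$ is faithful.

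There is essentially no obstacle here: the statement follows directly from the facts that $G$ acts on $A$ by algebra automorphisms and that $X$ generates $A$. In particular, nothing specific to the Monster fusion law is used, and the same proof applies to any axial algebra with a $T$-graded fusion law. The only point requiring any care is the bookkeeping remark that passing to the closure $\overline{X}$ changes neither the group $G$ nor the subalgebra $\langle X\rangle = A$, so that no generality is lost by the standing assumption that $X$ is closed under $G$.
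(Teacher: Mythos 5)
Your argument is correct and is essentially the paper's own: the fixed-point set of $g\in G$ is a subalgebra (equivalently, $g$ fixes all products of axes and hence their span), so fixing $X$ pointwise forces $g=\id_A$. The extra remarks about passing to the closure $\overline{X}$ match the paper's standing assumption and add nothing that changes the argument.
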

\begin{proof}
Suppose that $g \in G$ fixes all the axes in $X$.  However, the subspace of $A$ fixed by $g$ is a subalgebra and, since it contains $X$, it contains the whole algebra $A$.
\end{proof}

As $G$ is a group of automorphisms of $A$, if $a, b \in X$ generate a dihedral subalgebra $B$, then, for any $g \in G$, the subalgebra generated by $a^g, b^g$ is isomorphic to $B$.  In this way, we obtain the \emph{shape} of the algebra which is a map $S$ from the set of $G$-orbits on $X \times X$ to the set of dihedral algebras.

Given a pair of axes $(a, b)$, let $D_{a,b}$ be the dihedral group generated by $\tau_a$ and $\tau_b$.  Define $X_{a,b} = a^D \cup b^D$, where $D := D_{a,b}$.  It is clear that $D_{a,b} = D_{b,a}$ and $X_{a,b} = X_{b,a}$.

A Sakuma algebra has type $n\textrm{L}$.  We wish to show that $n$ can be determined solely from the action of the dihedral group $D_{a,b}$.

\begin{lemma}\label{dihedralorbs}
Let $a,b \in X$ and $D := D_{a,b}$.  Then, $|a^D| = |b^D|$.  If $a$ and $b$ are in the same orbit, then the length of this orbit is $1$, $3$, or $5$.  Otherwise, if $a$ and $b$ are in different orbits, then the length of each orbit is $1$, $2$, or $3$.  Moreover, the Sakuma algebra generated by $a$ and $b$ has type $n\textrm{L}$, where $n = |X_{a,b}|$.
\end{lemma}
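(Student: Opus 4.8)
The plan is to work through the nine Norton--Sakuma types one at a time, using Table~\ref{tab:sakuma} together with the definition $a_{\epsilon+2k} = a_\epsilon^{\rho^k}$ and the fact (stated above) that $\tau_{a^g} = \tau_a^g$ for all $g \in \Aut(n\mathrm{L})$. The key observation is that $D := D_{a,b} = \langle \tau_a, \tau_b \rangle$ acts on the set $\{a_i\}$ of axes, and that the orbit structure of this action is exactly what the lemma describes. First I would recall that $D$ is dihedral, generated by the two reflections $\tau_{a_0}$ and $\tau_{a_1}$, with $\rho = \tau_{a_0}\tau_{a_1}$ generating the rotation subgroup; the images of $a_0$ and $a_1$ under the powers of $\rho$ give the $a_i$, and since $\tau_{a_0}$ and $\tau_{a_1}$ are themselves among the reflections of $D$, conjugation by $D$ permutes the $a_i$ among themselves.

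The heart of the argument is the following. In $\Aut(n\mathrm{L})$ the subgroup $\langle \tau_{a_0}, \tau_{a_1}\rangle$ is dihedral of order dividing $2n$, but it may act with a kernel on the axes; what we must pin down is the image. I would argue that $\rho = \tau_{a_0}\tau_{a_1}$ has order exactly $m := |X_{a,b}|$ or $m/2$ (according to parity) \emph{as a permutation of the axes}, by a direct check: in each Sakuma type one verifies from the explicit products and the formula $\tau_a(v) = v$ on $A_+^a$, $= -v$ on $A_-^a$, that $\rho^{|a_0^D|}$ fixes every $a_i$, while no smaller power does. Concretely, $a_0$ has orbit $\{a_0, a_2, a_4, \dots\}$ under $\langle\rho\rangle$, and reading off Table~\ref{tab:sakuma} the number of distinct such axes is: $1$ in type $1\mathrm{A}$; $2$ in types $2\mathrm{A}$, $2\mathrm{B}$ (orbits $\{a_0\}$, $\{a_1\}$ of size $1$ — wait, here $a_0^D$ has size $1$ and likewise $a_1^D$, so $|X_{a,b}| = 2$); $3$ in types $3\mathrm{A}$, $3\mathrm{C}$ (single orbit $\{a_{-1}, a_0, a_1\}$ of size $3$); $2$ in $4\mathrm{A}$, $4\mathrm{B}$ for each of the two orbits $\{a_0, a_2\}$, $\{a_1, a_{-1}\}$, total $|X_{a,b}| = 4$; and $5$ in $5\mathrm{A}$ (single orbit of size $5$); $3$ for each of the two orbits in $6\mathrm{A}$, total $|X_{a,b}| = 6$. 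The equality $|a^D| = |b^D|$ then follows since the dihedral group $D$ acts transitively on... no: rather, $a_1 = a_0^{\tau_{a_1}}$ would not hold in general, but $a_1$ and $a_0$ are swapped or related by the same rotation action, and in all cases the two $\langle\rho\rangle$-orbits, being $\{a_0, a_2, \dots\}$ and $\{a_1, a_3, \dots\}$, have the same cardinality because $\rho$ conjugates one to a shift of itself; when $n$ is odd the reflection $\tau_{a_0}$ interchanges the two and forces them to coincide. Finally, $n = |X_{a,b}|$ is then immediate since the type label $n\mathrm{L}$ was defined so that the algebra is spanned using $a_0, \dots, a_{n-1}$ (together with the $\rho$-indexed extra elements), i.e.\ there are exactly $n$ axes $a_i$.

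The main obstacle I anticipate is the potential \emph{kernel} of the action of $\langle \tau_{a_0}, \tau_{a_1}\rangle$ on $A$: a priori $\rho$ could act trivially on the axes while the algebra still "is" of type $n\mathrm{L}$, which would break $n = |X_{a,b}|$. I would dispose of this by the faithfulness argument of Lemma~\ref{Gfaithful}: the subspace of $n\mathrm{L}$ fixed by a putative kernel element is a subalgebra containing all axes, hence is the whole algebra, so the kernel is trivial and $\langle\rho\rangle$ acts on the axes with the orders listed. One subtlety is type $2\mathrm{B}$, where $\tau_{a_0} = \tau_{a_1}$ would make $\rho = 1$; but here the axes $a_0, a_1$ are genuinely distinct, fixed by both involutions, so $|a_0^D| = |a_1^D| = 1$ and $|X_{a,b}| = 2 = n$, consistent with the claim (the orbit lengths $1$ are on the "different orbits" list). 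A second subtlety is making sure that in the odd cases $3\mathrm{A}$, $3\mathrm{C}$, $5\mathrm{A}$ the two a-priori-separate orbits of $a_0$ and $a_1$ really merge: this is where one uses that $\tau_{a_0}$ sends $a_1 \mapsto a_{-1} = a_1^\rho{}^{-1}$ lying in $a_0$'s would-be-complementary orbit, and a short computation with the reflection shows $a_0^{\tau_{a_0}\text{-conjugate}}$ reaches $a_1$, so $|a_0^D| = |b^D| = n$. These are all finite case checks, each a few lines, and none presents a genuine difficulty once the faithfulness reduction is in place.
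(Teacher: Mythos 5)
Your proposal is correct in outline, but it is precisely the route the paper declines to take: its proof opens with ``A direct proof would be long and computational'' and then sidesteps the case analysis entirely. Instead of computing $\tau_{a_0}$, $\tau_{a_1}$ and $\rho$ inside each of the eight algebras of Table \ref{tab:sakuma}, the authors embed each Norton--Sakuma algebra into the Griess algebra, where axes are in bijection with $2\A$-involutions of the Monster and $\tau_{x^g}=\tau_x^g$; the orbit question then becomes the purely group-theoretic one of how a dihedral group $D_{2n}$ permutes its generating reflections by conjugation --- one class of size $n$ when $n$ is odd, two classes of size $n/2$ when $n$ is even --- and the identification $n=|X_{a,b}|$ falls out of Norton's and Sakuma's classification. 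What their approach buys is that no eigenspace decomposition ever has to be computed; what yours buys is self-containedness, needing only Table \ref{tab:sakuma} and not the embedding into the Griess algebra. The cost is that the computational core of your argument --- that the listed $a_i$ are pairwise distinct and that $\rho^{|a_0^D|}$ fixes every $a_i$ while no smaller power does --- is asserted rather than carried out; verifying it means diagonalising $\ad_{a_0}$ in each of the eight algebras (up to dimension $8$ for $6\A$) and composing the resulting sign maps, which is exactly the ``long and computational'' work the paper avoids. Your use of the Lemma \ref{Gfaithful} argument to kill the kernel of $D$ on the axes is a correct and necessary supplement (the paper deals with the same issue with the phrase ``up to the kernel''), and your handling of the degenerate case $2\B$, where $\tau_{a_0}=\tau_{a_1}=1$ on the algebra, is right. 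If you intend the case-by-case route, those eight explicit computations must actually appear for the proof to be complete.
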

\begin{proof}
A direct proof would be long and computational.  So instead we observe that each Norton-Sakuma algebra is contained in the Griess algebra and there we have a bijection between axes and $2\A$-involutions in the Monster $M$.  So, we may take the dihedral subgroup $H \leq M$ generated by the involutions associated to each axis (in the Griess algebra).  In particular, up to the kernel, the action of $H$ on $X$ is the same as the action of $D$ on $X$.

Since in the Griess algebra we have a bijection between axes and $2\A$-involutions and $\tau_x^g = \tau_{x^g}$ for $g \in H$, we may consider the orbits of involutions in $H$ rather than the orbits of axes.  The result now follows from properties of dihedral groups and Sakuma's theorem.
\end{proof}

Thus, when we know the action of $G$ on $X$, $n$ is known for each orbit and the shape is determined by choices of $\textrm{L}$.  Furthermore, these choices are not independent.


If $a,b,c,d \in X$ then we say $(a,b)$ \emph{dominates} $(c,d)$ if $c,d \in X_{a,b}$.  In particular, when this happens, $X_{c,d} \subseteq X_{a,b}$ and $D_{c,d} \leq D_{a,b}$.  Note also that the subalgebra $\langle c,d \rangle$ is contained in $\langle a, b\rangle$.  Hence, if $(a,b)$ dominates $(c,d)$, then the choice of dihedral subalgebra $\langle a,b \rangle$ determines the choice for $\langle c,d \rangle$.  For the Monster fusion law, we have the following non-trivial inclusions
\[
\begin{array}{c|c}
\langle a,b \rangle & \langle c,d \rangle \\
\hline
4\textrm{A} & 2\textrm{B} \\
4\textrm{B} & 2\textrm{A} \\
6\textrm{A} & 2\textrm{A} \\
6\textrm{A} & 3\textrm{A}
\end{array}
\]
Note that here, not only does the choice of $\langle a,b \rangle$ determine the choice for $\langle c,d \rangle$, but also the choice for $\langle c,d\rangle$ uniquely determines the choice for $\langle a,b\rangle$.  Additionally, note that the pair $(a,b)$ always dominates $(b,a)$ and vice versa, so in the next concept which describes the totality of choices, we may just work with the set $\{a,b\}$ instead of the pairs $(a,b)$ and $(b,a)$.  Notice also that since $X_{a,b} = X_{b,a}$, the concept of domination is not affected by the switch to sets.

Let ${X\choose 2}$ denote the set of $2$-subsets of $X$.  The orbits of $G$ on ${X\choose 2}$ are the vertices of a directed graph, called the \emph{shape graph}, with the edges given by domination.  By the above comment, there is at most one choice of dihedral subalgebra for each weakly connected component (i.e. a connected component of the underlying undirected graph).  So, the shape of an algebra is fully described by assigning one dihedral subalgebra per weakly connected component.  Sometimes there is no choice for a given component.  Namely, when that component contains a $6\A$, or $5\A$.

Additionally, if the M8 axiom is assumed, then this further restricts the allowable shapes.  Suppose that $a$ and $b$ are such that $X_{a,b} = \{a,b\}$ and $\tau_a$ and $\tau_b$ are the involutions associated to $a$ and $b$.  Then $\tau_a\tau_b$ has order two.  If $\tau_a\tau_b$ is in the image of the $\tau$-map, then M8 demands that the dihedral subalgebra $B = \langle a,b \rangle$ generated by $a$ and $b$ be a $2\A$.  Conversely, if $\tau_a\tau_b$ is not in the image $\tau$, then the dihedral subalgebra $B$ must be a $2\B$.  In both cases, this defines the shape on the connected component containing the orbit of $\{a,b\}$.  However, the only connected components which don't contain any dihedral subalgebras with $n=2$ are those which just contain a single dihedral subalgebra with $n=3$.  So, if the M8 condition is assumed the only choice over a shape is choosing whether those connected components which consist of a single $3\L$ are $3\A$, or $3\C$.

We now turn to the general case of a fusion law $\mathcal{F}$ which is $T$-graded and an abstract group of permutations $G$ acting faithfully on a set $X$.  We are thinking of an unknown axial algebra $A$ with fusion law $\mathcal{F}$ and the action of the Miyamoto group on the axes being the action of (a normal subgroup of) $G$ on $X$.  It is clear that we may just consider actions up to isomorphism.  We will define analogous concepts to above.

\begin{definition}
A map $\tau\colon  X \times T^* \to G$ is called a \emph{$\tau$-map} if for all $x \in X$, $\chi \in T^*$, $g \in G$
\begin{enumerate}
\item $\tau_x \colon  T^* \to G$ is a group homomorphism;
\item $\tau_x(\chi)^g = \tau_{x^g}(\chi)$.
\end{enumerate}
We call the image $G_0:=\langle \tau_x(\chi) : x \in X, \chi \in T^* \rangle\unlhd G$ the \emph{Miyamoto group} of $\tau$.
\end{definition}

As previously, we define $T_x := \langle \tau_x(\chi) : \chi \in T^* \rangle\leq G_0$.

\begin{lemma}\label{taufix}
$T_x \subseteq Z(G_x)$.
\end{lemma}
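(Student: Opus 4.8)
The plan is to unwind the two defining properties of a $\tau$-map directly; nothing beyond elementary group theory is needed. Writing $G_x := \stab_G(x)$ and using $Z(G_x) = G_x \cap C_G(G_x)$, it suffices to prove the two containments $T_x \subseteq G_x$ and $T_x \subseteq C_G(G_x)$ separately. The first I would take essentially for free from the definition of a $\tau$-map: each automorphism $\tau_x(\chi)$ fixes the point $x$, so $T_x = \langle \tau_x(\chi) : \chi \in T^* \rangle \leq G_x$. This is the abstract shadow of the fact that, in an axial algebra, an axis $a$ lies in the identity-graded component of the $T$-grading it induces — the eigenvalue $1$ always lies in $\mathcal{F}_e$, where $e$ is the identity of $T$ — and $\tau_a(\chi)$ acts there as multiplication by $\chi(e) = 1$.

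The substantive step is the second containment, and it reduces to a single conjugation identity. Let $g \in G_x$, so that $x^g = x$. Then the second defining property of a $\tau$-map gives $\tau_x(\chi)^g = \tau_{x^g}(\chi) = \tau_x(\chi)$ for every $\chi \in T^*$; hence $g$ commutes with each generator $\tau_x(\chi)$ of $T_x$, and therefore with all of $T_x$. Since $g$ was an arbitrary element of $G_x$, this is precisely the assertion $T_x \subseteq C_G(G_x)$. Combining the two containments yields $T_x \subseteq G_x \cap C_G(G_x) = Z(G_x)$, as required.

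I do not expect a genuine obstacle here: the lemma is immediate once the axioms are spelled out. The one point that deserves attention is the availability of $T_x \leq G_x$ — that is, that $\tau_x(\chi)$ fixes $x$ — which is built into the notion of a $\tau$-map and is essential, since the homomorphism and equivariance conditions by themselves do not force it (for instance, a regular action of an abelian group equipped with a constant, non-trivial $\tau$ satisfies those two conditions but violates the conclusion).
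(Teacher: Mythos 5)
Your proof is correct and takes essentially the same route as the paper's: the entire content is the conjugation identity $\tau_x(\chi)^g = \tau_{x^g}(\chi) = \tau_x(\chi)$ for $g \in G_x$, which is exactly the commutator computation $[\tau_x(\chi),g]=1$ given there. The only difference is that you explicitly isolate the containment $T_x \leq G_x$ (and rightly note that it is not a formal consequence of the two stated axioms of a $\tau$-map, but is built into the intended notion), a point the paper's proof leaves implicit.
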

\begin{proof}
Let $g \in G_x$.  Then for $\chi \in T^*$,
\[
[\tau_x(\chi), g] = \tau_x(\chi)^{-1} \tau_x(\chi)^g = \tau_x(\chi)^{-1} \tau_{x^g}(\chi) =\tau_x(\chi)^{-1} \tau_{x}(\chi) = 1\qedhere
\]
\end{proof}

We define $D = D_{a,b} := \langle T_a, T_b \rangle$ for $a,b \in X$. Unlike the Monster type case, $D$ does not have to be a dihedral group.  In an $\mathcal{F}$-axial algebra, $D_{a,b}$ acts on the subalgebra $\langle a,b \rangle$.  Suppose that we know a list $\mathcal{L}$ of $2$-generated subalgebras with marked generators for the fusion law $\mathcal{F}$.  We wish to impose conditions on $\tau$ so that $D_{a,b}$ has an action on $X_{a,b}:=a^D \cup b^D$ which is an action observed on the axes of some $2$-generated algebra in our list.  Otherwise, $\tau$ cannot lead to a valid $\mathcal{F}$-axial algebra.

\begin{definition}
A $\tau$-map $\tau \colon X \times T^* \to G$ is called \emph{admissible} if for every set $\{a,b\} \in {X \choose 2}$, the action of $D_{a,b}$ on $X_{a,b}$ agrees with at least one algebra in the list $\mathcal{L}$.
\end{definition}

For example, let $\mathcal{F}$ be the Monster fusion law.  Then a complete list of the dihedral subalgebras is known.  In particular, the orbits of $a$ and $b$ under $D$ must have the properties given in Lemma \ref{dihedralorbs}.  That is,
\begin{enumerate}
\item $k := |a^D| = |b^D|$.
\item If $a$ and $b$ are in the same $D$-orbit, then $k = 1$, $3$, or $5$.
\item If $a$ and $b$ are in different $D$-orbits, then $k = 1$, $2$, or $3$.
\end{enumerate}

From now on, we only consider admissible $\tau$-maps.  The normaliser $N = N_{\mathrm{Sym}(X)}(G)$ of the action of $G$ on $X$ acts on the set of admissible $\tau$-maps by
\[
\tau \mapsto \tau^n \qquad \mbox{where } (\tau^n)_x(\chi) := \tau_{x^{n^{-1}}}(\chi)^n
\]
for $n \in N$.  Note that, by the definition of a $\tau$-map, $G$ acts trivially on each $\tau$.  So an action of $N/G$ is induced on the set of $\tau$-maps.  Thus, we may just consider admissible $\tau$-maps up to the action of $N/G$.

Next we introduce domination.

\begin{definition}
For $\{a,b\},\{c,d\} \in {X \choose 2}$, we say $\{a,b\}$ \emph{dominates} $\{c,d\}$ if $c,d \in X_{a,b}$.
\end{definition}

\begin{definition}
The \emph{shape graph} $\Gamma$ is a directed graph with vertices given by orbits of $G$ on ${X \choose 2}$ and edges given by domination between pairs from those orbits.
\end{definition}

%
%

As observed above, for the Monster fusion law, any one choice of $2$-generated subalgebra for a weakly connected component of the shape graph determines all other $2$-generated algebras in that component.  For a general fusion law, the dominated algebra may not always determine the larger algebra uniquely.  However, the larger, dominating algebra always determines the smaller algebra. We will call these the \emph{domination restrictions}.

\begin{definition}
Given an abstract group $G$ acting faithfully on a set $X$ and an admissible $\tau$-map, a \emph{shape} on $X$ is a set of choices of $2$-generated algebra for all orbits of $G$ on ${X \choose 2}$ which satisfy the domination restrictions.
\end{definition}

Given a group $G$ acting faithfully on a set $X$ and an admissible $\tau$-map $\tau$, we may consider all the possible shapes.  Let $K = \stab_N(\tau)$.  As noted above, $G$ acts trivially on each $\tau$, and in fact it also fixes every shape. On the other hand, $K$ (or rather $K/G$) permutes the $G$-orbits of ${X \choose 2}$, and so 
may act non-trivially on the set of shapes.  So, we may consider shapes for $\tau$ up to the action of $K$.

In summary, given an action of a group $G$ on a putative set of axes $X$, we can determine all the possible admissible $\tau$-maps. Given a particular $\tau$-map $\tau$, we can further determine all the possible shapes that an axial algebra with Miyamoto group $G_0$ and $\tau$-map $\tau$ could have.

\section{Useful lemmas}\label{sec:preliminaries}

In this section, we will discuss some properties which must hold in axial algebras.  We will use these later in the algorithm to discover relations and to build up eigenspaces.

Recall that we adopt the notation that for a subset $I \subseteq \mathcal{F}$, 
\[
A_I = \bigoplus_{\lambda \in I} A_\lambda
\]
We begin by noting that, since we allow $I$ to be a subset, we can add and intersect the $A_I$.

\begin{lemma}\label{sumup&int}
Let $I,J \subseteq \mathcal{F}$, then
\begin{enumerate}
\item[$1.$] $A_I + A_J = A_{I \cup J}$
\item[$2.$] $A_I \cap A_J = A_{I \cap J}$\qed
\end{enumerate}
\end{lemma}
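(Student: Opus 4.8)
The plan is to prove both statements directly from the definition $A_I = \bigoplus_{\lambda \in I} A_\lambda$, using the fact that $a$ is semisimple, so $A = \bigoplus_{\lambda \in \mathcal{F}} A_\lambda$ is an honest direct sum decomposition into the eigenspaces of $\ad_a$ (with the convention that $A_\lambda = 0$ for $\lambda \notin \Spec(a)$). Fix this decomposition and write $\pi_\lambda \colon A \to A_\lambda$ for the associated projections; then for any subset $I \subseteq \mathcal{F}$ we have $A_I = \bigoplus_{\lambda \in I} A_\lambda$, which is precisely the image of the projection $\pi_I := \sum_{\lambda \in I} \pi_\lambda$, equivalently the set of vectors all of whose eigen-components lie in $I$.

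For part (1): the inclusion $A_{I \cup J} \supseteq A_I + A_J$ is immediate since $A_I, A_J \subseteq A_{I\cup J}$ and $A_{I\cup J}$ is a subspace. For the reverse inclusion, take $v \in A_{I \cup J}$ and write $v = \sum_{\lambda \in I \cup J} v_\lambda$ with $v_\lambda \in A_\lambda$; split the index set as $I \cup J = I \sqcup (J \setminus I)$ to get $v = \left(\sum_{\lambda \in I} v_\lambda\right) + \left(\sum_{\lambda \in J\setminus I} v_\lambda\right) \in A_I + A_J$. For part (2): the inclusion $A_{I\cap J} \subseteq A_I \cap A_J$ is clear. Conversely, if $v \in A_I \cap A_J$, then writing $v = \sum_{\lambda \in \mathcal{F}} v_\lambda$ in the full eigenspace decomposition, membership in $A_I$ forces $v_\lambda = 0$ for $\lambda \notin I$, and membership in $A_J$ forces $v_\lambda = 0$ for $\lambda \notin J$; hence $v_\lambda = 0$ unless $\lambda \in I \cap J$, so $v \in A_{I\cap J}$. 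The key point making the intersection statement work is the uniqueness of the eigenspace decomposition, i.e.\ that the sum $\bigoplus_{\lambda} A_\lambda$ is direct.

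There is no real obstacle here; this is a routine unwinding of definitions once one records that semisimplicity of $a$ gives the direct-sum decomposition $A = \bigoplus_{\lambda \in \mathcal{F}} A_\lambda$. The only thing to be slightly careful about is the bookkeeping with the convention $A_\lambda = 0$ for $\lambda \notin \Spec(a)$, so that the indexing sets $I, J$ may contain elements of $\mathcal{F}$ that are not actual eigenvalues — but this changes nothing since adding zero summands does not affect any of the spans or intersections.
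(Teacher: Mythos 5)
Your proof is correct, and it is exactly the routine argument the paper has in mind: the authors state this lemma with no proof (the \qed appears in the statement itself), regarding it as an immediate consequence of the direct-sum decomposition $A=\bigoplus_{\lambda\in\mathcal{F}}A_\lambda$ coming from semisimplicity. Your careful handling of the uniqueness of eigen-components for the intersection statement and of the convention $A_\lambda=0$ for $\lambda\notin\Spec(a)$ is precisely what makes the omitted proof go through.
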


By an abuse of terminology, we will call the $A_I$ eigenspaces of $a$.

\begin{lemma}\label{multiplydown}
Let $a$ be an axis, $I \subseteq \mathcal{F}$, $\lambda \in I$ and $A_I = A_I^a$.  Then, for all $u \in A_I$
\[
ua - \lambda u \in A_{I - \lambda}
\]
\end{lemma}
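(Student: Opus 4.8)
The plan is to expand $u$ into its $\ad_a$-eigenspace components and use the fact that $\ad_a-\lambda\id$ annihilates exactly the $\lambda$-component while acting invertibly on each other eigenspace present in $I$.

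First I would write $u=\sum_{\mu\in I}u_\mu$ with $u_\mu\in A_\mu$; this is possible since $u\in A_I=\bigoplus_{\mu\in I}A_\mu$, using that $a$ is semisimple. Then
\[
ua-\lambda u=\ad_a(u)-\lambda u=\sum_{\mu\in I}(\mu-\lambda)u_\mu=\sum_{\mu\in I\setminus\{\lambda\}}(\mu-\lambda)u_\mu,
\]
since the $\mu=\lambda$ term vanishes. Each summand $(\mu-\lambda)u_\mu$ lies in $A_\mu$, so the whole sum lies in $\bigoplus_{\mu\in I\setminus\{\lambda\}}A_\mu=A_{I\setminus\{\lambda\}}=A_{I-\lambda}$, which is what we want. (The notation $I-\lambda$ in the statement means $I\setminus\{\lambda\}$, consistent with the abuse of notation mixing elements and subsets announced earlier in the excerpt.)

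There is essentially no obstacle here: the only thing to be slightly careful about is the degenerate cases. If $\lambda\notin\Spec(a)$ then $u_\lambda=0$ automatically (or rather the $\lambda$-component is already zero in the direct sum decomposition, under the convention that $A_\lambda^a=0$ when $\lambda\notin\Spec(a)$), so the computation is unaffected and in fact $ua-\lambda u=ua-\lambda u$ with no term removed but still lies in $A_{I-\lambda}$ since $A_\lambda=0$. Likewise if $I=\{\lambda\}$ then the right-hand side is $0\in A_\emptyset=0$. So the proof is a one-line eigenspace decomposition argument; the main "work" is simply matching it to the notational conventions $A_I=\bigoplus_{\mu\in I}A_\mu$ and $I-\lambda=I\setminus\{\lambda\}$ set up earlier.
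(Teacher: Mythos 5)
Your proof is correct and is essentially identical to the paper's: both decompose $u=\sum_{\mu\in I}u_\mu$ into eigencomponents, apply $\ad_a-\lambda\,\id$ to get $\sum_{\mu\in I}(\mu-\lambda)u_\mu$, and observe that the $\lambda$-term vanishes so the result lies in $A_{I-\lambda}$. The extra remarks on degenerate cases are harmless but not needed.
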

\begin{proof}
We may decompose $u \in A_I$ as $u = \sum_{\mu \in I} u_\mu$, where $u_\mu \in A_\mu$.  Multiplying by $a$ and subtracting $\lambda u$, we have
\begin{align*}
ua - \lambda u &= \sum_{\mu \in I} u_\mu a - \lambda u \\
&= \sum_{\mu \in I} (\mu - \lambda) u_\mu
\end{align*}
Since the coefficient of $u_\lambda$ is zero, the above is in $A_{I - \lambda}$.
\end{proof}

Recall that we extended the operation $\star$ to all subsets of $\mathcal{F}$, turning the fusion law into a magma.  Moreover, the eigenspaces $A_I$ satisfy the fusion law.  However, not all fusion rules on subsets are equally useful for our algorithm. In particular, assuming that $\mathcal{F}$ is $T$-graded, we only need to consider $I$ fully 
contained in a part $\mathcal{F}_t$ for some $t\in T$. We call such subsets \emph{pure}. 

\begin{definition}
Let $I \subseteq \mathcal{F}_s$ and $J \subseteq \mathcal{F}_t$ for $s, t \in T$.  We define a fusion rule $I\star J = K$ to be \emph{useful} if 
\begin{enumerate}
\item $K \subsetneqq \mathcal{F}_{s \star t}$; and 
\item there does not exist $I \subsetneqq I'\subseteq \mathcal{F}_s$, or $J \subsetneqq J' \subseteq \mathcal{F}_t$ such that
\[
I' \star J = K \qquad \mbox{or} \qquad I \star J' = K
\]
\end{enumerate}
\end{definition}

In particular, given a useful fusion rule $I \star J = K$, if we require it to hold, all other rules $X \star Y = K$ for subsets $X \subseteq I$ and $Y \subseteq J$ will automatically be satisfied.  In this way, it is enough to impose just the useful fusion rules and the grading to capture all the information from the fusion law. 

To calculate the useful fusion rules for any fusion law $\mathcal{F}$ we begin by writing out the expanded fusion table for all pure subsets of $\mathcal{F}$ with rows and columns partially ordered by inclusion.  We then consider all sets $K$ which occur as entries in the table.  The useful rules are precisely those where $K$ is not a full part $\mathcal{F}_t$, for $t\in T$, and it does not appear in the expanded table below in that column, or to the right in that row.  Doing this to the Monster fusion law results in the following list.

\begin{lemma}\label{Monsteruseful}
The useful fusion rules for the Monster fusion table are
\[
\begin{gathered}
1 \star 0 = \emptyset \qquad 1 \star \{1,0\} = 1 \qquad 1 \star \{0,\tfrac{1}{4}\} = \tfrac{1}{4} \qquad 1 \star \{1,0,\tfrac{1}{4}\} = \{1,\tfrac{1}{4}\} \\
0 \star \{1,0\} = 0 \qquad 0 \star \{1,\tfrac{1}{4}\} = \tfrac{1}{4} \qquad 0 \star \{1,0,\tfrac{1}{4}\} = \{0,\tfrac{1}{4}\} \\
\tfrac{1}{4} \star \tfrac{1}{4} = \{1,0\} \qquad \tfrac{1}{4} \star \{1,0\} = \tfrac{1}{4} \\
\{1,0\} \star \{1,0\} = \{1,0\} \qquad \{1,0\} \star \{1,\tfrac{1}{4}\} = \{1,\tfrac{1}{4}\} \qquad \{1,0\} \star \{0,\tfrac{1}{4}\} = \{0,\tfrac{1}{4}\}
\end{gathered}
\]
\end{lemma}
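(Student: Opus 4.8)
The plan is to carry out a completely mechanical computation on the expanded fusion table for pure subsets of the Monster fusion law, exactly as described in the paragraph preceding the lemma. First I would split the eigenvalue set $\mathcal{F}=\{1,0,\tfrac14,\tfrac1{32}\}$ according to the $\mathbb{Z}_2$-grading into $\mathcal{F}_+=\{1,0,\tfrac14\}$ and $\mathcal{F}_-=\{\tfrac1{32}\}$, and list the pure subsets: the nonempty subsets of $\mathcal{F}_+$, namely $\{1\}$, $\{0\}$, $\{\tfrac14\}$, $\{1,0\}$, $\{1,\tfrac14\}$, $\{0,\tfrac14\}$, $\{1,0,\tfrac14\}$, together with the single nonempty subset $\{\tfrac1{32}\}$ of $\mathcal{F}_-$. (The subset $\mathcal{F}_-$ itself is a full part, so any rule landing in it is discarded, and similarly for $\mathcal{F}_+$; this is why only $\star$-products among subsets of $\mathcal{F}_+$ can possibly be useful.) For each ordered pair $(I,J)$ of pure subsets with $I,J\subseteq\mathcal{F}_+$, I would compute $I\star J=\bigcup_{\lambda\in I,\mu\in J}\lambda\star\mu$ from Table~\ref{tab:monsterfusion}, recording the result $K$.

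Next I would apply the two filters from the definition of a useful fusion rule. The first filter discards any rule whose right-hand side $K$ equals the full part $\mathcal{F}_{s\star t}$ (here either $\mathcal{F}_+$ or $\mathcal{F}_-$); concretely this removes, e.g., $\tfrac14\star\tfrac1{32}=\tfrac1{32}=\mathcal{F}_-$ and $\{1,\tfrac14\}\star\{0,\tfrac14\}=\{1,0,\tfrac14\}=\mathcal{F}_+$, and all rules with a $\tfrac1{32}$ on one side and output all of $\mathcal{F}_+$ or $\mathcal{F}_-$. The second filter is the maximality/minimality condition: I would order the rows and columns of the expanded table by inclusion and, for each surviving rule $I\star J=K$, check whether the \emph{same} output $K$ already appears strictly above it in the same column (i.e.\ for some $I'\supsetneq I$ with $I'\subseteq\mathcal{F}_s$ and the same $J$) or strictly to the right in the same row (some $J'\supsetneq J$). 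If so, the rule is redundant and gets removed; what remains is exactly the list of useful rules. Since the fusion law is symmetric, I only need to record one of each pair $I\star J$, $J\star I$.

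Carrying this out, the surviving rules are precisely the twelve listed in the statement: the four involving a $1$ on the left paired with $0$, $\{1,0\}$, $\{0,\tfrac14\}$, $\{1,0,\tfrac14\}$; the three with $0$ on the left paired with $\{1,0\}$, $\{1,\tfrac14\}$, $\{1,0,\tfrac14\}$; the two with $\tfrac14$ on the left, $\tfrac14\star\tfrac14=\{1,0\}$ and $\tfrac14\star\{1,0\}=\tfrac14$; and the three products of the two-element set $\{1,0\}$ with $\{1,0\}$, $\{1,\tfrac14\}$, $\{0,\tfrac14\}$. One should double-check a couple of the more delicate discards: for instance $1\star\{1,0,\tfrac14\}=\{1\}\cup\{\tfrac14\}=\{1,\tfrac14\}$ survives because no strictly larger subset of $\mathcal{F}_+$ can be put in place of either factor (replacing $\{1,0,\tfrac14\}$ by nothing bigger is possible since it is already $\mathcal{F}_+$, and replacing $\{1\}$ by $\{1,0\}$ gives $\{1,0\}\star\{1,0,\tfrac14\}=\{1,0,\tfrac14\}=\mathcal{F}_+$, a different output), whereas $\{0\}\star\{0,\tfrac14\}=\{0\}\cup\{\tfrac14\}=\{0,\tfrac14\}$ is discarded because $\{0,\tfrac14\}\star\{0,\tfrac14\}$ also equals $\{0,\tfrac14\}$ and sits to the right in its row.

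The computation is entirely routine, so there is no genuine obstacle; the only thing to be careful about is bookkeeping — making sure the partial order on rows and columns is respected when applying the "appears above in the column or to the right in the row" test, since a rule that looks useful in isolation can be knocked out by a larger rule with the same right-hand side. I would present this as a short verification referencing the expanded table rather than writing out all sixty-four-odd entries by hand, and note that the same procedure applied to the Ising fusion law $\Phi(\alpha,\beta)$ yields the analogous list with $\tfrac14,\tfrac1{32}$ replaced by $\alpha,\beta$.
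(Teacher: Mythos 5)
Your approach is exactly the paper's: the lemma is justified there only by the mechanical procedure described in the preceding paragraph (expand the table over pure subsets, discard outputs equal to a full part, then discard any rule whose output reappears for a strictly larger input set), and your computation reproduces the correct list of twelve rules. One of your illustrative ``double-checks'' is wrong, however: $\{0,\tfrac14\}\star\{0,\tfrac14\}$ is not $\{0,\tfrac14\}$ but the full part $\{1,0,\tfrac14\}$, since it contains $\tfrac14\star\tfrac14=\{1,0\}$. The rule $0\star\{0,\tfrac14\}=\{0,\tfrac14\}$ is still correctly discarded, but for a different reason: enlarging $\{0,\tfrac14\}$ to $\{1,0,\tfrac14\}$ gives $0\star\{1,0,\tfrac14\}=\{0,\tfrac14\}$ (and likewise enlarging $\{0\}$ to $\{1,0\}$ gives $\{1,0\}\star\{0,\tfrac14\}=\{0,\tfrac14\}$), so the same output appears to the right in the row and below in the column. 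This slip does not affect the final list, but it is exactly the kind of bookkeeping error your own closing caveat warns against.
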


Note that all useful fusion rules for the Monster fusion law come from the even part.  That is because the values of $\star$ involving the odd part $\{\tfrac{1}{32}\}$ are fully determined by the grading.

If $A$ is primitive, then for an axis $a$, $G_a$ certainly fixes every vector in $A_1^a$.  We now describe another trick which uses this weaker condition.

\begin{lemma}\label{wh-w}
Let $1 \in I \subset \mathcal{F}$ and $u \in A_I(a)$ for an axis $a$.  Suppose further that $G_a$ fixes every vector in $A_1^a$.  Then, for all $g$ in the stabiliser $G_a$,
\[
u^g - u \in A_{I - 1}
\]
\end{lemma}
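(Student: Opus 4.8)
The plan is to decompose $u$ along the eigenspace decomposition for $a$, isolate the weight-$1$ component, and show it is killed by $g - 1$. Since $1 \in I$, we may write $u = u_1 + u'$ where $u_1 \in A_1^a$ and $u' \in A_{I - 1}$; this uses Lemma~\ref{sumup&int}(1) to see that $A_I = A_1 \oplus A_{I-1}$. Applying $g$ and using linearity gives $u^g - u = (u_1^g - u_1) + (u'^g - u')$. By hypothesis $g \in G_a$ fixes every vector of $A_1^a$, so $u_1^g = u_1$ and the first bracket vanishes.

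It remains to show $u'^g - u' \in A_{I-1}$, for which it suffices to show $A_{I-1}$ is $g$-invariant, i.e.\ $A_{I-1}^g \subseteq A_{I-1}$. The key point is that $g \in G_a$ fixes the axis $a$, hence $g$ commutes with $\ad_a$: for any $v$, $(\ad_a v)^g = (av)^g = a^g v^g = a v^g = \ad_a(v^g)$, using that $g$ is an algebra automorphism. Therefore $g$ preserves each eigenspace $A_\lambda^a$ of $\ad_a$, and hence preserves any sum of them; in particular it preserves $A_{I-1} = \bigoplus_{\lambda \in I, \lambda \neq 1} A_\lambda^a$. This gives $u'^g - u' \in A_{I-1}$, and combining with the previous paragraph yields $u^g - u \in A_{I-1}$.

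I do not anticipate a serious obstacle here; the statement is a mild strengthening of the routine fact that a stabilising automorphism preserves eigenspaces, with the only twist being that we trade full fixing of $A_1^a$ (which would hold automatically if $A$ were primitive, as noted just before the lemma) for the explicit hypothesis. The one thing to be careful about is the direction of the decomposition: we need $1 \in I$ so that the weight-$1$ part genuinely appears as a summand of $A_I$, and we need the \emph{fixing} hypothesis (not merely invariance) on $A_1^a$ to kill the $u_1$ term outright rather than just keep it inside $A_I$. Everything else is forced by $g$ being an automorphism fixing $a$.
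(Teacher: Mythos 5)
Your proof is correct and is essentially the paper's own argument: decompose $u$ over the eigenspaces of $a$, use that $g$ fixes $a$ (hence preserves each $A_\lambda^a$) to keep the non-unit part inside $A_{I-1}$, and use the pointwise-fixing hypothesis to kill the $A_1^a$ component. The only cosmetic difference is that you lump the eigenvalues in $I-1$ into a single summand $u'$ while the paper keeps the full decomposition $u=\sum_{\mu\in I}u_\mu$; the content is identical.
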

\begin{proof}
We decompose $u = \sum_{\mu \in I} u_\mu$ with respect to the eigenspaces of $a$. Since $g$ fixes $a$, it preserves every eigenspace of $a$.  Furthermore, since $g$ fixes every vector in $A_1^a$, we have the following
\begin{align*}
u^g - u &= \sum_{\mu \in I} u_\mu^g - \sum_{\mu \in I} u_\mu\\
&= u_1^g - u_1 + \sum_{\mu \in I-1} u_\mu^g - u_\mu \\
&= \sum_{\mu \in I-1}  u_\mu^g - u_\mu \in A_{I - 1}\qedhere
\end{align*}
\end{proof}

\section{Algorithm}\label{sec:algorithm}

In this section, we describe our main result which is an algorithm for constructing an axial algebra. A very similar algorithm can also be used to build a module for a known axial algebra. However, we don't want to complicate this paper with extra definitions and so we just deal with the task of constructing an axial algebra. 

In principle, there is no reason to believe that an axial algebra which is generated by a finite set of axes is even finite dimensional.  Clearly, if it is infinite dimensional, our algorithm will not finish.  However, in practice, we can compute a large number of examples as we shall see in Section \ref{sec:results}.

As described in Section \ref{sec:shape}, associated with a $T$-graded $\mathcal{F}$-axial algebra $A$ we have a group $G$ acting faithfully on a set $X$, an admissible $\tau$-map $\tau \colon X \times T^*\to G_0 \unlhd G$ and a shape.  Given such a $G$, $X$, $\tau$ and shape, the algorithm builds an axial algebra $A$ with axes $X$ and Miyamoto group $G_0$.  It does so by defining a partial algebra and completing it step by step into a full algebra.

As input to our algorithm, we take a field $\mathbb{F}$, a $T$-graded fusion law $\mathcal{F}$, a group $G$ acting faithfully on a set $X$, an admissible $\tau$-map $\tau$ and a shape.  These are fixed throughout the rest of this section.

\subsection{Partial algebras}

At the core of the algorithm is a concept which we call a partial algebra.  We write $S^2(V)$ for the symmetric square of $V$.

\begin{definition}
Given a group $G$, a \emph{partial $G$-algebra} is a triple $W = (W, V, \mu)$ where $W$ is a $G$-module over $\mathbb{F}$, $V \subseteq W$ is a $G$-submodule and $\mu \colon S^2(V) \to W$ is a linear map which is $G$-equivariant.
\end{definition}

Where it is clear, we will abuse notation and write $uv$ for $\mu(u,v)$.

\begin{lemma}
Given a $G$-invariant set $Y$ in $W$, there exists a unique smallest submodule $W(Y)$ of $W$ such that
\[
W(Y) = \langle Y \rangle + \mu(S^2(W(Y) \cap V))
\]
\end{lemma}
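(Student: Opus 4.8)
The statement asserts the existence of a unique smallest submodule $W(Y)$ satisfying the fixed-point equation $W(Y) = \langle Y \rangle + \mu(S^2(W(Y) \cap V))$. The natural approach is the standard one for such closure statements: build $W(Y)$ by an increasing iteration, then verify it is both a solution and the smallest one. Concretely, I would set $U_0 := \langle Y \rangle$ (the $G$-submodule generated by $Y$, which is indeed a submodule since $Y$ is $G$-invariant), and inductively define $U_{n+1} := \langle Y \rangle + \mu(S^2(U_n \cap V))$. Each $U_n$ is a $G$-submodule: $U_n \cap V$ is a $G$-submodule of $V$, so $S^2(U_n \cap V)$ carries its natural $G$-action, and since $\mu$ is $G$-equivariant its image is a $G$-submodule of $W$; adding $\langle Y \rangle$ keeps it a submodule. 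One checks $U_n \subseteq U_{n+1}$ by induction — $U_0 \subseteq U_1$ is clear, and $U_n \subseteq U_{n+1}$ gives $U_n \cap V \subseteq U_{n+1} \cap V$, hence $S^2(U_n \cap V) \subseteq S^2(U_{n+1}\cap V)$ and the inductive step follows. Then put $W(Y) := \bigcup_{n \geq 0} U_n = \sum_{n\geq 0} U_n$, a $G$-submodule.

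Next I would verify that $W(Y)$ satisfies the required equation. For the inclusion $\langle Y \rangle + \mu(S^2(W(Y)\cap V)) \subseteq W(Y)$: any element of $S^2(W(Y)\cap V)$ is a finite sum of products $u \cdot v$ with $u, v \in W(Y)\cap V$, and since the chain $(U_n)$ is increasing there is some $n$ with $u, v \in U_n \cap V$, so $\mu(u,v) \in U_{n+1} \subseteq W(Y)$; linearity then handles arbitrary elements of $S^2(W(Y)\cap V)$. (Here one uses that $W(Y)\cap V = \bigcup_n (U_n \cap V)$, which holds because the $U_n$ are nested.) For the reverse inclusion $W(Y) \subseteq \langle Y\rangle + \mu(S^2(W(Y)\cap V))$: it suffices to show $U_n$ is contained in the right-hand side for all $n$, which is immediate by induction since $U_n \cap V \subseteq W(Y)\cap V$ gives $\mu(S^2(U_n\cap V)) \subseteq \mu(S^2(W(Y)\cap V))$, hence $U_{n+1} = \langle Y\rangle + \mu(S^2(U_n\cap V)) \subseteq \langle Y\rangle + \mu(S^2(W(Y)\cap V))$.

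Finally, for minimality: suppose $W'$ is any submodule with $W' = \langle Y\rangle + \mu(S^2(W' \cap V))$. I claim $U_n \subseteq W'$ for all $n$, by induction. Indeed $U_0 = \langle Y \rangle \subseteq W'$; and if $U_n \subseteq W'$ then $U_n \cap V \subseteq W' \cap V$, so $\mu(S^2(U_n \cap V)) \subseteq \mu(S^2(W'\cap V)) \subseteq W'$, giving $U_{n+1} = \langle Y \rangle + \mu(S^2(U_n\cap V)) \subseteq W'$. Hence $W(Y) = \bigcup_n U_n \subseteq W'$. Uniqueness of the smallest such submodule is then automatic. The only mild subtlety — and the step I would be most careful about — is the interaction of $S^2$ with the nested union, i.e. checking that $S^2(W(Y)\cap V) = \bigcup_n S^2(U_n \cap V)$ (equivalently, that every symmetric tensor over the union is a symmetric tensor over some $U_n \cap V$), which again follows from the chain being directed under inclusion; everything else is routine bookkeeping with $G$-equivariance.
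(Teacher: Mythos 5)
Your proof is correct and follows essentially the same route as the paper: build $W(Y)$ as the union of an increasing chain starting from $\langle Y\rangle$ and closing under $\mu$ applied to the part lying in $V$ (the paper's recursion $U_{i+1}=U_i+\mu(S^2(U_i\cap V))$ differs only cosmetically from yours and yields the same union). You simply spell out the monotonicity, fixed-point, and minimality checks that the paper leaves implicit.
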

\begin{proof}
Define $U_0 := \langle Y \rangle$ and inductively define
\[
U_{i+1} := U_i + \mu(S^2(U_i \cap V)).
\]
Then the union of the $U_i$ is $W(Y)$.
\end{proof}

We call $W(Y) = (W(Y), W(Y) \cap V, \mu|_{S^2(W(Y) \cap V)})$ the \emph{partial subalgebra generated by $Y$}.  If $W(Y) = W$, then we say $Y$ \emph{generates} $W$.  For example, an axial algebra $A$ is a partial $G$-algebra, where $G$ is the Miyamoto group, and the set of axes $X$ generates $A$.

\begin{definition}
Let $(W, V, \mu)$ be a partial $G$-algebra and $(W', V', \mu')$ be a partial $G'$-algebra.  A homomorphism of partial algebras is a pair $(\phi, \psi)$ where
\begin{enumerate}
\item $\phi \colon W \to W'$ is a vector space homomorphism such that $\phi(V) \subseteq V'$.
\item $\psi \colon G \to G'$ is a group homomorphism such that 
\[
\phi(w^g) = \phi(w)^{\psi(g)}
\]
for all $w \in W$, $g \in G$.
\item $\phi(\mu(u,v)) = \mu'(\phi(u), \phi(v))$ for all $u,v \in V$.
\end{enumerate}
\end{definition}

In other words, we have the following commutative diagram and additionally the action of $G$ (sometimes acting through $\psi$) commutes with the diagram.

\begin{center}
\begin{tikzcd}[step=large]
S^2(V) \arrow[r, "\mu"] \arrow[d, "\phi \otimes \phi"] & W \arrow[d, "\phi"] & V \arrow[l, hookrightarrow]\arrow[d, "\phi"] \\
S^2(V') \arrow[r, "\mu'"] & W'  & V' \arrow[l, hookrightarrow]
\end{tikzcd}
\end{center}

\subsection{Gluings}

In order to correctly build an axial algebra, we must impose the conditions coming from the shape.  We do this by gluing in subalgebras corresponding to the shape.

First, consider an axial algebra $A$ and let $B$ be a subalgebra in the shape.  Then there is a $K$-submodule $U$ of $A$ such that $\phi \colon U \to B$ is an algebra isomorphism which is invariant under the action of the induced Miyamoto group
\[
K := \langle T_y : y \in Y \rangle
\]
where $Y = X \cap U$ is the subset of axes in $X$ which are in $U$.  However, since $Y$ is a subset of $X$, $K$ does not necessarily act faithfully on $Y$.  Let $N$ be the kernel of the action and $H := K/N$.  Then, the Miyamoto group of $B$ is isomorphic to $H$ and so there exists a group homomorphism $\psi \colon K \to H$ with the property that
\[
\phi(u^g) = \phi(u)^{\psi(g)}
\]
for all $g \in K$, $u \in U$.  With this in mind, we make the following definition.

\begin{definition}
Let $(W, V, \mu)$ be a $G$-partial algebra generated by $X$ and $(W', V', \mu')$ be a partial $H$-algebra generated by a set $X'$.  A \emph{gluing} of $W'$ onto a closed set of axes $Y \subseteq X$ is a homomorphism of partial algebras $(\phi, \psi)$ from the restricted $K$-partial subalgebra $(W(Y), W(Y) \cap V, \mu|_{S^2(W(Y) \cap V)})$ to $(W', V', \mu')$ such that
\begin{enumerate}
\item $K := \langle T_y : y \in Y \rangle \leq G$.
\item $\phi \colon W(Y) \to W'$ is surjective and $\phi(Y) = X'$.
\item $\psi \colon K \to H$ is surjective.
\end{enumerate}
\end{definition}

\subsection{The algorithm}

Our task is to build an algebra of the correct shape.  We will do this by defining a sequence of partial algebras and at each stage `discovering' more of the multiplication.  Throughout our algorithm $W = (W, V, \mu)$ will be a partial $G$-algebra generated by the set $X$, our putative set of axes on which $G$ acts faithfully.  Our algorithm will terminate when $V = W$.  That is, when we know all the multiplication.  We begin with $W$ having basis indexed by the set $X$.  That is, $W$ is a permutation module for the action of $G$ on $X$.  No products are known at this stage, so $V = 0$.  

Throughout our algorithm, we keep track of various (sums of) eigenspaces for each axis.  These are key to finding enough relations to allow our algorithm to terminate.  Recall that the sum of eigenspaces is denoted by $W_I = \bigoplus_{\lambda \in I} W_\lambda$, for a subset $I \subseteq \mathcal{F}$. Note that at any given stage in our algorithm, we may not know the full $\lambda$-eigenspace and so we do not necessarily know the decomposition $W = \bigoplus_{\lambda \in \mathcal{F}} W_\lambda$.  Indeed, we may know that a vector lies in $W_I$, for some $I \subset \mathcal{F}$, but not know how to decompose it into the sum of eigenvectors for eigenvalues $\lambda \in I$.  For this reason, we keep track of sums of eigenspaces $W_I$.  Note that relations are vectors in $W_\emptyset$.  Since $G$ acts on $W$, we may just consider axes and their associated decompositions up to the action of $G$.

It turns out that it is enough to keep track of just the $W_I$, for pure subsets $I$.  That is, the $W_I$ for $I \subseteq \mathcal{F}_t$, for $t \in T$.  We show that this holds, provided we make a mild assumption on the grading group $T$.

Indeed, by assumption, for each axis $a \in X$, there is a decomposition $W = \bigoplus_{t \in T} W_t$.  We claim that we can recover the decomposition $W = \bigoplus_{t \in T/R} W_t$, where $R := \bigcap_{\chi \in T^*} \ker(\chi)$, from the action on $T_a$ on $W$.  Indeed, recall from the definition that $\tau_a(\chi) \in T_a$ acts on $W_t$ by scalar multiplication by $\chi(t)$.  Since this must hold in any axial algebra we build, we can distinguish the $T$-grading up to the kernel $R = \bigcap_{\chi \in T^*} \ker(\chi)$.  If $T^* \cong T$, then $R =1$.  However if $R$ is non-trivial, for example when the characteristic divides $|T|$, or when the field doesn't contain the suitable roots of unity, we can only detect a coarser grading by $T/R \cong T^*$.  Since we may always consider a more coarse grading, from now on, we may assume that $T^* \cong T$ and hence $T_a$ detects the $T$-grading.  Note that for a $\mathbb{Z}_2$-grading, provided the field is not of characteristic two, $-1$ is always in the field and hence we can detect a $\mathbb{Z}_2$-grading using the axis subgroup.

Let $J \subset \mathcal{F}$.  Since we know the decomposition $W = \bigoplus_{t \in T} W_t$, this induces a decomposition $W_J = \bigoplus_{t \in T} W_{J_t}$, where $J_t := J \cap \mathcal{F}_t$.  Now, the only results we will use in our algorithm are those found in Section \ref{sec:preliminaries}, namely, summation and intersection of subspaces, being an eigenvector, obeying the fusion law and, optionally, Lemma \ref{wh-w}.  It is easy to see that for all of these, the information gained about $W_J$ is precisely the sum of the information gained about the $W_{J_t}$.  For example, if $\lambda \in J$, then by Lemma \ref{multiplydown}, $ua-\lambda u \in A_{J-\lambda}$.  But, since we may decompose $u = \sum_{t \in T} u_t$, we have
\[
u_t a - \lambda u_t \in A_{J_t-\lambda} = \begin{cases} A_{J_t} & \mbox{if } \lambda \notin J_t \\
A_{J_t-\lambda}& \mbox{if } \lambda \in J_t \end{cases}
\]
In particular, we recover the only non-trivial result by just considering the pure subset $J_t \subseteq \mathcal{F}_t$.  This justifies our claim that it is enough to keep track of the $W_I$, for pure subsets $I$.

The information for the multiplication, and so also for the eigenspaces, will come from gluing in subalgebras to our partial algebra according to the shape.  In order to fully describe our axial algebra, we must glue in enough subalgebras to cover all those $2$-generated subalgebras given in the shape.  However, we may glue in known subalgebras of the correct shape which are generated by three or more axes.  These have the advantage of containing more information.  (We may also glue in some partial subalgebras, so long as we also glue in enough known subalgebras to cover those given in the shape.)

Since no multiplication is known when we start and $W$ is spanned by the axes, for each gluing of a subalgebra $B$ onto a closed subset of axes $Y$, we have $W(Y) = \langle Y \rangle$ and $\phi$ is the corresponding injection on these axes compatible with the action.

The algorithm has three main stages:
\begin{enumerate}
\item Expansion by adding the formal products of vectors we do not already know how to multiply.
\item Work to discover relations and construct the eigenspaces for the axes.
\item Reduction by factoring out by known relations.
\end{enumerate}

We continue applying these three stages until $V = W$ and our algorithm terminates.  Again, we note that since we use the action of the group, we need only consider subalgebras and axes up to the action of $G$.

If our algorithm does terminate, then we have the following result, which we will prove after describing our algorithm.

\begin{theorem}\label{algorithmthm}
Suppose that the algorithm terminates and returns $A$.  Then $A$ is a \textup{(}not necessarily primitive\textup{)} axial algebra generated by axes $X$ with Miyamoto group $G_0$, $\tau$-map $\tau$ and of the given shape.

Moreover, provided we do not use the optional Lemma $\ref{wh-w}$ in stage $2$ of the algorithm, the algebra is universal.  That is, given any other axial algebra $B$ with the same axes $X$, Miyamoto group $G_0$, $\tau$-map $\tau$ and shape, $B$ is a quotient of $A$.
\end{theorem}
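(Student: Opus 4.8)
The plan is to verify each claimed property of $A$ in turn, using the structure of the algorithm, and then establish universality by constructing a quotient map from $A$ onto any competing algebra $B$.

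First I would check that $A$ is an axial algebra generated by $X$. When the algorithm terminates we have $V=W$, so $\mu$ is a full commutative product on $W=:A$, and $W$ is generated by $X$ as a partial (hence genuine) algebra. It remains to see that each $x\in X$ is an $\mathcal F$-axis. Idempotency $x^2=x$ comes from the gluings: every $x$ lies in some glued-in $2$-generated subalgebra $B$ from the shape (this is exactly why the shape must cover all orbits on $\binom{X}{2}$), and in $B$ the image of $x$ is idempotent, so the relation $x^2-x$ is enforced. Semisimplicity and the fusion law: the eigenspace-tracking in stage 2 builds, for each axis $a$ (up to $G$), the subspaces $W_I^a$ for pure $I$; at termination the computations of Lemmas \ref{sumup&int}, \ref{multiplydown}, together with the grading recovered from $T_a$ (using the reduction to $T^*\cong T$), force $W=\bigoplus_{\lambda\in\mathcal F}W_\lambda^a$ and $W_\lambda^a W_\mu^a\subseteq W_{\lambda\star\mu}^a$. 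One has to argue that the loop genuinely certifies these as eigenspaces rather than merely upper bounds — i.e. that when the algorithm halts, no further relations or eigenvector decompositions are discoverable, which is precisely the halting condition $V=W$ combined with closure of the eigenspace data under the useful fusion rules of Lemma \ref{Monsteruseful} (in general, under all useful rules).

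Next, the Miyamoto group and $\tau$-map. Since $\tau_x(\chi)\in G_0$ acts on $W$ preserving the $T$-grading with respect to $x$ (this is built in, as $T_x$ detects the grading), each $\tau_x(\chi)$ really is the Miyamoto automorphism of the axial algebra $A$ attached to $x$; conversely any Miyamoto automorphism of $A$ arising from an axis in $X$ is of this form, so the Miyamoto group of $A$ is exactly $G_0=\langle\tau_x(\chi):x\in X,\chi\in T^*\rangle$ and the $\tau$-map is the given $\tau$. The shape statement then follows because for each $\{a,b\}$ the subalgebra $\langle a,b\rangle\le A$ is a quotient of the glued-in model $B$ for its orbit, and a dimension/consistency count (the glued subalgebras are covered and admissibility guarantees the $D_{a,b}$-action matches) shows the quotient is trivial, so $\langle a,b\rangle\cong B$.

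Finally, universality. Let $B$ be any axial algebra with the same $X$, $G_0$, $\tau$ and shape. I would retrace the algorithm, building at each stage $i$ a surjective homomorphism of partial algebras $(\phi_i,\id)$ from the $i$-th partial algebra $W^{(i)}$ onto the partial algebra obtained by performing the same formal operations inside $B$; since $B$ really is an axial algebra with these data, every expansion step (a free construction), every gluing (the shape is the same, so the glued model maps into $B$), and every relation discovered in stage 2 via the Lemmas of Section \ref{sec:preliminaries} is a genuine relation in $B$ — this is where the exclusion of the optional Lemma \ref{wh-w} matters, since that lemma uses primitivity of $A$, which $B$ need not satisfy. Passing to the colimit gives a surjection $A\twoheadrightarrow B$ fixing $X$, i.e. $B$ is a quotient of $A$. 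The main obstacle I anticipate is the bookkeeping in stage 2: one must show precisely that every relation the algorithm imposes is forced in \emph{every} such $B$ (soundness) and, for the non-universality-sensitive parts, that enough relations are imposed to make $A$ itself satisfy the fusion law (completeness of the eigenspace closure) — in other words, that the halting state is simultaneously "not too big" (it's a quotient of everything) and "not too small" (it is itself an axial algebra of the right shape).
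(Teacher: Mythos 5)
Your proposal is correct and follows essentially the same route as the paper's (much terser) proof: axiality follows because stage 2 imposes exactly the relations of Lemmas \ref{sumup&int} and \ref{multiplydown} and the useful fusion rules, the Miyamoto group and $\tau$-map are built in by construction, and universality follows because every relation factored out (gluings, eigenvector relations, fusion rules) must hold in any competing $B$, with Lemma \ref{wh-w} excluded precisely because it presupposes primitivity. Your treatment is in fact more detailed than the paper's on the one delicate point (that the halting state certifies genuine eigenspace decompositions and that the glued shape survives reduction unquotiented), which the paper's proof passes over in silence.
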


Note that, if we do use Lemma \ref{wh-w} in stage 2 of the above algorithm, then we have assumed that $G_a$ fixes every vector in $A_1^a$ for each axis $a$.  This holds in primitive axial algebras, but not necessarily in the non-primitive case.

\subsubsection*{Stage 1: Expansion}
We expand $W$ to a larger partial algebra $W_\new$ by adding vectors which are the formal products of elements we do not yet know how to multiply.

\begin{description}[leftmargin =0.5cm]
\item[Step 1.] We begin by finding a complement subspace $C$ for $V$ in $W$.  Hence, as a vector space
\[
W = V \oplus C
\]
\end{description}

Wherever possible, we choose $C$ to be a $G$-submodule.  For example, in characteristic $0$, this is always possible.

Since we know the multiplication on $V$ and our multiplication is commutative, we just need to add the products of $V$ with $C$ and products of $C$ with $C$.

\begin{description}[leftmargin =0.5cm]
\item[Step 2.] Form a new partial algebra $W_\new = (W_\new, V_\new, \mu_\new)$ with
\begin{align*}
W_\new &= W \oplus V\otimes C \oplus S^2(C) \\
V_\new &= W
\end{align*}
and $\mu$ extended in the obvious way to $\mu_\new$.
\end{description}

Note that if $C$ is a $G$-submodule, then the summands in $W_\new$ are all $G$-submodules and hence $W_\new$ can be seen to be a $G$-module in a natural way.  Otherwise, we must compute the action of $G$ on $W_\new$.

\begin{description}[leftmargin =0.5cm]
\item[Step 3.] For each subalgebra $B$ glued onto a set of axes $Y$, we extend the gluing as follows.  Since $U := W(Y) \subset W$ and $V_\new = W$, we now know all the products of elements in $U$, so we adjust the gluing.  Specifically, let $U_V = U \cap V$ and find a complement $D$ so that
\[
U = U_V \oplus D
\]
Then
\[
U_\new := U \oplus \mu(U_V, D) \otimes \mu(D, D)
\]
is the subalgebra of $W_\new$ generated by $Y$.  Note that $\mu(U_V, D) \cong U_V \otimes D$ and $\mu(D,D) \cong S^2(D)$.  We extend the map $\phi$ to $\phi_\new$ in the obvious way, by mapping the new products in $U_\new$ to the corresponding products in $B$.  Hence, $\phi_\new$ preserves multiplication.  Observe that $U_\new$ is also a $K$-submodule and so the homomorphism $\psi$ is unchanged.  Hence, $(\phi_\new, \psi)$ is a gluing of $B$ onto $Y$ in $W_\new$.
\end{description}

\begin{description}[leftmargin =0.5cm]
\item[Step 4.] For each gluing, we add the kernel of $\phi_\new$ to the space of relations.
\end{description}

Indeed, if $\phi_\new(v) = 0$, then $v$ must be the zero vector in any final axial algebra, hence it is a relation.

\begin{description}[leftmargin =0.5cm]
\item[Step 5.] For each axis $a$ and subalgebra $U_\new$ which contains $a$, we use $\phi_\new$ to pull back the eigenspaces of $B \cap \phi_\new(U_\new)$ to add to the eigenspaces in $W_\new$.
\end{description}

Since we only consider axes and gluings up to $G$-orbit, we must be careful as one orbit of axes may split into several orbits when intersected with the subalgebra.

We note that the above expansion step can be made to work if we do not expand to the whole of $W$, but just to some $G$-submodule $U$ of $W$ which contains $V$.  That is, we choose some subspace complement $C$ to $V$ in $U$ (picking it to be a $G$-submodule if possible) and we expand to
\[
W_\new = W \oplus V\otimes C \oplus S^2(C)
\]
and have $V_\new = U$.  The gluing for the subalgebras and the eigenspaces are updated similarly to above.  This partial expansion has the advantage that it is easier to do computationally as it is smaller and we may still be able to find relations.

\subsection*{Stage 2: Building up eigenspaces}

We begin by recovering the grading on $W_\new$, before finding further eigenvectors and relations. Recall that relations are simply elements 
of the eigenspace $W_{\new,\emptyset}$.

\begin{description}[leftmargin =0.5cm]
\item[Step 1.] For each axis $a$, we compute the action of $T_a$ on $W_\new$ and hence find the decomposition $W_\new = \bigoplus_{t \in T} W_{\new, t}$ 
with respect to $a$.
\end{description}

For example, in the Monster fusion law case, we have the $\mathbb{Z}_2$-decomposition $W_\new  = W_{\new, +} \oplus W_{\new, -}$, where $W_{\new,+}$ and $W_{\new,-}$ are the $1$- and $-1$-eigenspaces of $\tau_a$, respectively.

If $C$ is a submodule, then the calculation can be simplified as follows
\[
W_{\new, t} = W_t \oplus \bigoplus_{s \in T} (V_s \otimes C_{s^{-1}t}) \oplus \bigoplus_{s \in T} C_s \times C_{s^{-1}t}
\]
where $V_s$ and $C_s$ are the $T$-graded parts of $V$ and $C$ respectively.

We no longer need the old $W$, so we now drop the subscript and write $W$ for $W_\new$ and similarly $V$ for $V_\new$.

\begin{description}[leftmargin =0.5cm]
\item[Step 2.] We repeatedly apply the following techniques until the pure eigenspaces $W_I$ (including the relation eigenspace $W_{\emptyset}$) stop growing.
\begin{enumerate}
\item For each $t \in T$, we sum together and take intersections of the $W_I$ for each pure subset $I \subsetneqq \mathcal{F}_t$ as per Lemma \ref{sumup&int}.
\item For each $t \in T$, let $\lambda \in I \subseteq \mathcal{F}_t$.  For each $u \in W_I \cap V$, we add $ua - \lambda u$ to $W_{I - \lambda}$ as per Lemma \ref{multiplydown}.
\item We apply each useful fusion rule $I \star J = K$.  That is, for all $u \in W_I \cap V$ and $v \in W_J \cap V$, we add their product $uv$ to $W_K$.
\end{enumerate}
\end{description}

Note that in parts (2) and (3), we of course may just do these for a basis of the eigenspaces concerned.

In the case of the Monster fusion law, $\mathcal{F}_- = \{ \frac{1}{32} \}$.  So, for the odd subspace $W_-$, there are no subspaces to sum or intersect in part (1) above.  Also in part (2) for $W_-$, since the only choice for $\lambda$ is $\frac{1}{32}$, we obtain that $ua - \frac{1}{32}u\in W_{\emptyset}$ is a relation. Since $W_- = W_\frac{1}{32}$ will not grow in size, we need only apply part (2) once.  Also, as noted after Lemma \ref{Monsteruseful}, all the useful fusion rules for the Monster fusion law come from the even part.  Therefore, for the Monster fusion law, we only need apply part (2) once to the odd part and then just work on the even part.

\begin{description}[leftmargin =0.5cm]
\item[Step 3. (Optional)] If additionally we want to force that $G_a$ fixes every vector in $W_{1_T}$ (as is true for primitive algebras), then we may apply the technique from Lemma \ref{wh-w} to get $u^g - u \in W_{1_T-1}$ for all $g \in G_a$ and $u \in W_{1_T}$.
\end{description}

By the assumptions in Lemma \ref{wh-w}, we may only apply this lemma to subsets such that $1 \in I$.  We claim that it is enough to just apply it to $1_T$.  By the discussion at the beginning of the section, since $1 \in 1_T$ we need just consider pure subsets $I \subset {\cal F}_{1_T}$ with $1\in I$.  Let $u \in W_I \subset W_{1_T}$.  So, the vector $v = u^g -u$ is found in both $W_{I - 1}$ and $W_{1_T -1}$.  Since the action of $g \in G_a$ preserves the eigenspaces, we know trivially that $v \in W_I$.  So, by intersecting as in Step 2 (1), we recover that $v \in W_{I - 1} = W_{1_T -1} \cap W_I$.  Moreover, once we have done the expansion step, we know the decomposition given by the $T$-grading and this does not change until the next expansion step.  Hence, we need only apply Step 3 once per expansion.

\subsection*{Stage 3: Reduction}

If we have found some relations for our algebra (i.e. $W_{\emptyset}\neq 0$), we may reduce our partial algebra $W$ by factoring out by the relations.  Let $R$ be the $G$-submodule generated by the $W_{\emptyset}$.  Before forming the quotient, we search for additional relations by using the two following techniques.

First, if $R$ intersects $V$ non-trivially, then we may multiply $R \cap V$ by elements of $V$.  Since elements $r \in R$ are relations and must become zero in the target algebra, so are $vr$, for all $r \in R \cap V$ and $v \in V$.  So we repeatedly multiply by elements of $V$ to grow $R$ until the dimension of $R$ stabilises.

Secondly, suppose that $R$ intersects a subspace $U = W(Y)$ where we have glued in a subalgebra $B$.  Let $(\phi, \psi)$ be the gluing map.  Then $R' := \phi(U \cap R)$ are relations in the subalgebra $B$.  Since we know the multiplication in $B$, we can use the first technique to multiply by elements of $B$ to grow $R'$ (this may include multiplying by elements we do not yet know how to multiply by in $W$, hence giving us extra information).  We then pull back $R'$ to $W$ using $\phi^{-1}$ to get additional relations.

\begin{description}[leftmargin =0.5cm]
\item[Step 1.] We use the above two techniques repeatedly, until we find no further relations.  Let $\pi \colon W \to W/R$ be the quotient map.  We define $W_\new$ as the image $\pi(W)$, $V_\new = \pi(V)$ and $\mu_\new$ is the map induced by $\mu$.
\end{description}

\begin{description}[leftmargin =0.5cm]
\item[Step 2.] For each gluing, we update both the subspace and the subalgebra by taking $U_\new = \pi(U)$ and $B_\new = B/\pi(U \cap R)$ and updating the gluing maps accordingly.
\end{description}

\begin{description}[leftmargin =0.5cm]
\item[Step 3.] We transfer the axes and eigenspaces $W_I$ to $W_{\new}$ by applying $\pi$.
\end{description}

Note that if $R$ contains any relations of the form $a-b$ for axes $a$ and $b$, then we have reduced the (potential) algebra to one generated by a smaller set of axes $X'$.  Hence we may exit the algorithm.

Now that we have described our algorithm, we shall prove Theorem \ref{algorithmthm}.

\begin{proof}[Proof of Theorem $\ref{algorithmthm}$.]
It is clear from the construction of the algorithm that $A$ is spanned by products of axes in $X$.  Since each axis is contained in its own $1$-eigenspace, they are idempotents.  At stage 2 we use Lemma \ref{multiplydown}, so each axis must be semisimple.  Also at stage 2 we impose the fusion law, therefore the multiplication must satisfy this and hence $A$ is an axial algebra for the required fusion law.  By construction, for each axis $a \in X$ and $\chi \in T^*$, $\tau_a(\chi)$ is the corresponding Miyamoto automorphism and hence $G_0$ is the Miyamoto group.

Observe that any axial algebra $B$ with the same axes, Miyamoto group, $\tau$-map and shape must satisfy the relations we have factored by in our algorithm.  If we do not use Lemma \ref{wh-w} in stage 2, then we have not factored by any other relations and so $B$ must be a quotient of $A$.
\end{proof}

In practice, for reasons of efficiency, we perform some of the steps above in a different order.  For example, we may perform the reduction step at any stage.  In particular, it may be computationally advantageous to reduce once we find enough relations as any further calculations will be performed in a smaller space and hence may be quicker.

\section{Results}\label{sec:results}

In Table \ref{tab:results}, we present some of the results that the implementation of our algorithm \cite{ParAxlAlg} in {\sc magma} \cite{magma} has found.  Our current implementation is restricted to a $\mathbb{Z}_2$-graded fusion law with one eigenvalue in the negative part and the examples given in the table are all for the Monster fusion law.  All the results here are also over $\mathbb{Q}$, although our implementation works over finite fields and even function fields.  Note that, although in our algorithm and implementation we do not require that the $\tau$-map be bijective, this is the case we concentrate on in the table as this is the situation considered by Seress \cite[Table 3]{seress}.

The columns in the table are
\begin{itemize}
\item Miyamoto group $G_0$.
\item Axes, where we give the size decomposed into the sum of orbit lengths.
\item Shape.  Here we omit shapes of type $5\textrm{A}$ and $6\textrm{A}$ as where these occur they are uniquely defined.  If an algebra contains a $4\textrm{A}$, or $4\textrm{B}$, we omit to mention the $2\textrm{B}$, or $2\textrm{A}$, respectively, that is contained in it.  Likewise, we omit the $2\A$ and $3\A$ that are contained in a $6\A$.
\item Dimension of the algebra.  A question mark indicates that our algorithm did not complete and a $0$ indicates that the algebra collapses.
\item The minimal $m$ for which $A$ is $m$-closed.  Recall that an axial algebra is $m$-closed if it is spanned by products of length at most $m$ in the axes.
\item Whether the algebra has a $G_0$-invariant Frobenius form that is non-zero on the set of axes $X$.  If it is additionally positive definite or positive semi-definite, we mark this with a pos, or semi, respectively.
\end{itemize}

In addition to the results in the table, we have computed many of the smaller groups acting on larger numbers of axes. For example, we have computed $S_4$ acting on $6$, $6+6$, $6+6+6$, $12$, $12+12$, $12+12+12$, $1+3+6$, $1+3+6+6$, $1+3+3+6+6$, $3+6$, $3+3+6$ and $3+6+6$ axes, but we do not present these results here.  Several of these are useful for gluing in to complete examples for larger groups $G_0 \geq S_4$.

Compared to Seress \cite{seress}, we find several new algebras.  This includes several new examples that are $3$-closed, only one of which was previously known.  It also includes many examples that do not satisfy the M8 condition, or the $2\mathrm{Aa}$, $3\A$, or $4\A$ conditions, but nevertheless lead to examples.  Note that Seress considers both $A_6$ and the non-split extension $3^{\textstyle\cdot} A_6$.  However, $3^{\textstyle\cdot} A_6$ does not have a faithful transitive action on $45$ points with an admissible $\tau$-map.  Indeed, its only actions on $45$ points with an admissible $\tau$-map have kernel $C_3$ and $A_6$ acting faithfully.  So, there is no axial algebra with Miyamoto group $3^{\textstyle\cdot} A_6$ acting on $45$ axes.

We now note some interesting results coming from the computed examples:  In all the cases below, there is at most one class of admissible $\tau$-map, however this is not true in general.  For example, the group $2^4$ acting on $2+2+2+2$ axes has four classes of admissible $\tau$-maps at least three of which lead to non-trivial axial algebras.  All the examples found so far are primitive (although in most cases the optional step 3 in stage 2 using Lemma \ref{wh-w} was used to construct them).

The largest $m$ for which we have examples which are $m$-closed but not $(m-1)$-closed is $5$.  There are two such examples which are $S_4$ acting on $1+3+6$ axes with shapes $4\A3\A2\A2\B2\B$ and $4\A3\C2\A2\B2\B$.  These have dimension $52$ and $27$, respectively.

All the examples computed have a $G_0$-invariant Frobenius form that is non-zero on the axes and all these forms are positive semi-definite.  Although the vast majority are positive definite, there are examples for which the form is positive semi-definite but not positive definite.  For example there is an algebra for the group $V_4$ acting on $2+2+1$ axes with shape $4\A2\A2\A$ and it has dimension $14$.  The radical of the form is $3$-dimensional which gives an ideal in the algebra.  Once we factor out by this, the resulting algebra also has the same group, orbit structure of axes and shape and is primitive of dimension $11$ with a positive definite Frobenius form.

We have also found several different examples which do not satisfy the $2\mathrm{Aa}$, $3\A$, or $4\A$ conditions.  In particular, when $\tau$ is not bijective, or is bijective and $1_G$ is in the image of the $\tau$-map (so there is an isolated axis) it is easy to find such examples.  However, we should not expect these conditions to hold even when $\tau$ is a bijection.  The example for $A_6$ on $45$ axes of shape $4\B3\A3\C$ has dimension $105$, but does not satisfy the $3\A$ axiom.  There are pairs of $3\A$ subalgebras which are disjoint, but have the same induced Miyamoto group.

Finally, consider the example which we cannot complete for $S_3\times S_3$ with $3+3$ axes and shape $3\A3\A2\A$.  An algebra of this shape can be found in the algebra $A$ of shape $3\A2\A$ on $15$ axes.  Namely, if we consider the subalgebra generated by the $3+3$ axes this has the required shape.  Moreover, this subalgebra is in fact the full algebra $A$, but it is $4$-closed with respect to these $3+3$ axes.  Since $A$ is a quotient of the algebra $\hat{A}$ we are trying to compute, the algebra $\hat{A}$ of shape $3\A3\A2\A$ is at least $4$-closed, which may be one reason it is hard to construct even though it is a small group.

\begin{longtable}{cccccc}
$G_0$ & axes & shape & dim & $m$ & form\\
\hline
$S_3\times S_3$ & 3+3 & 3A3A2A & ? \\
$S_3\times S_3$ & 3+3 & 3A3A2B & 8 & 2 & pos\\
$S_3\times S_3$ & 3+3 & 3A3C2A & 0 & 0 & -\\
$S_3\times S_3$ & 3+3 & 3A3C2B & 7 & 2 & pos\\
$S_3\times S_3$ & 3+3 & 3C3C2A & 0 & 0 & -\\
$S_3\times S_3$ & 3+3 & 3C3C2B & 6 & 1 & pos\\
$S_3\times S_3$ & 3+9 & 3A3A & 18 & 2 & pos\\
$S_3\times S_3$ & 3+9 & 3A3C & 0 & 0 & -\\
$S_3\times S_3$ & 3+9 & 3C3A & 0 & 0 & -\\
$S_3\times S_3$ & 3+9 & 3C3C & 0 & 0 & -\\
$S_3\times S_3$ & 3+3+9 & 3A2A & 18 & 2 & pos\\
$S_3\times S_3$ & 3+3+9 & 3A2B & 25 & 3 & pos\\
$S_3\times S_3$ & 3+3+9 & 3C2A & 0 & 0 & -\\
$S_3\times S_3$ & 3+3+9 & 3C2B & 0 & 0 & -\\

&&&&\\
$S_4$ & 6 & 3A2A & 13 & 2 & pos \\
$S_4$ & 6 & 3A2B & 13 & 3 & pos\\
$S_4$ & 6 & 3C2A & 9 &  2 & pos \\
$S_4$ & 6 & 3C2B & 6 & 1 & pos \\
$S_4$ & 6+3 & 4A3A2A & 23 & 3 & pos\\
$S_4$ & 6+3 & 4A3A2B & 25 & 3 & pos\\
$S_4$ & 6+3 & 4A3C2A & 0 & 0 & -\\
$S_4$ & 6+3 & 4A3C2B & 12 &  2 & pos \\
$S_4$ & 6+3 & 4B3A2A & 13 &  2 & pos \\
$S_4$ & 6+3 & 4B3A2B & 16 &  2 & pos \\
$S_4$ & 6+3 & 4B3C2A & 9 &  1 & pos \\
$S_4$ & 6+3 & 4B3C2B & 12 &  2 & pos \\
&&&&\\
$A_5$ & 15 & 3A2A & 26 & 2 & pos\\
$A_5$ & 15 & 3A2B & 46 & 3 & pos\\
$A_5$ & 15 & 3C2A & 20 & 2 & pos\\
$A_5$ & 15 & 3C2B & 21 & 2 & pos\\
&&&&\\
$S_5$ & 10 & 3A2A & ? &  \\
$S_5$ & 10 & 3A2B & ? &  \\
$S_5$ & 10 & 3C2A & 0  &  0 & - \\
$S_5$ & 10 & 3C2B & 10 & 1 & pos\\
$S_5$ & 10+15 & 4A & 61 & 2 & pos\\
$S_5$ & 10+15 & 4B & 36 & 2 & pos\\
&&&&\\
$L_3(2)$ & 21 & 4A3A & ? &  \\
$L_3(2)$ & 21 & 4A3C & 57 & 3 & pos\\
$L_3(2)$ & 21 & 4B3A & 49 & 2 & pos\\
$L_3(2)$ & 21 & 4B3C & 21 & 1 & pos\\
&&&&\\
$A_6$ & 45 & 4A3A3A & ? & \\
$A_6$ & 45 & 4A3A3C & 0 & 0& -\\
$A_6$ & 45 & 4A3C3C & 187 & 3 & pos\\
$A_6$ & 45 & 4B3A3A & 76 & 2 & pos\\
$A_6$ & 45 & 4B3A3C & 105 & 2 & pos\\
$A_6$ & 45 & 4B3C3C & 70 & 2 & pos\\
&&&&\\
$S_6$ & 15 & 3A2A & ? & \\
$S_6$ & 15 & 3A2B & ? & \\
$S_6$ & 15 & 3C2A & 0 & 0 & -\\
$S_6$ & 15 & 3C2B & 15 & 1 & pos\\
$S_6$ & 15+15 & 4A3A3A2A & ? & \\
$S_6$ & 15+15 & 4A3A3A2B & ? & \\
$S_6$ & 15+15 & 4A3A3C2A & 0 & 0 & -\\
$S_6$ & 15+15 & 4A3A3C2B & 0 & 0 & -\\
$S_6$ & 15+15 & 4A3C3C2A & 0 & 0 & -\\
$S_6$ & 15+15 & 4A3C3C2B & 0 & 0 & -\\
$S_6$ & 15+15 & 4B3A3A2A & 0 & 0 & -\\
$S_6$ & 15+15 & 4B3A3A2B & ? & \\
$S_6$ & 15+15 & 4B3A3C2A & 0 & 0 & -\\
$S_6$ & 15+15 & 4B3A3C2B & 0 & 0 & -\\
$S_6$ & 15+15 & 4B3C3C2A & 0 & 0 & -\\
$S_6$ & 15+15 & 4B3C3C2B & 0 & 0 & -\\
$S_6$ & 15+45 & 4A4A3A2A & 151 & 2 & pos\\
$S_6$ & 15+45 & 4A4A3A2B & 0 & 0 & -\\
$S_6$ & 15+45 & 4A4A3C2A & 0 & 0 & -\\
$S_6$ & 15+45 & 4A4A3C2B & 0 & 0 & -\\
$S_6$ & 15+45 & 4B4B3A2A & 0 & 0 & -\\
$S_6$ & 15+45 & 4B4B3A2B & 91 & 2 & pos\\
$S_6$ & 15+45 & 4B4B3C2A & 0 & 0 & -\\
$S_6$ & 15+45 & 4B4B3C2B & 0 & 0 & -\\
$S_6$ & 15+15+45 & 4A2A2A2A & 151 & 2 & pos\\
$S_6$ & 15+15+45 & 4A2A2A2B & 0 & 0 & -\\
$S_6$ & 15+15+45 & 4A2A2B2B & 0 & 0 & -\\
$S_6$ & 15+15+45 & 4A2B2A2A & 151 & 2 & pos\\
$S_6$ & 15+15+45 & 4A2B2A2B & 0 & 0 & -\\
$S_6$ & 15+15+45 & 4A2B2B2B & 0 & 0 & -\\
$S_6$ & 15+15+45 & 4B2A2A2A & 0 & 0 & -\\
$S_6$ & 15+15+45 & 4B2A2A2B & 0 & 0 & -\\
$S_6$ & 15+15+45 & 4B2A2B2B & 0 & 0 & -\\
$S_6$ & 15+15+45 & 4B2B2A2A & 0 & 0 & -\\
$S_6$ & 15+15+45 & 4B2B2A2B & 0 & 0 & -\\
$S_6$ & 15+15+45 & 4B2B2B2B & 106 & 2 & pos\\
&&&&\\
$3.S_6$ & 45 & 3A & ?  \\
$3.S_6$ & 45 & 3C & 0 & 0 & - \\
$3.S_6$ & 45+45 & 3A2A & 0 & 0 & - \\
$3.S_6$ & 45+45 & 3A2B & 0 & 0 & - \\
$3.S_6$ & 45+45 & 3C2A & 0 & 0 & - \\
$3.S_6$ & 45+45 & 3C2B & 136 & 2 & pos\\
&&&&\\
$(S_4\times S_3) \cap A_7$ & 18 & 3A3A3A & ? \\
$(S_4\times S_3) \cap A_7$ & 18 & 3A3A3C & 0 & 0 & - \\
$(S_4\times S_3) \cap A_7$ & 18 & 3A3C3C & ? \\
$(S_4\times S_3) \cap A_7$ & 18 & 3C3C3C & ? \\
$(S_4\times S_3) \cap A_7$ & 18+3 & 4B3A3A3A2A & ? \\
$(S_4\times S_3) \cap A_7$ & 18+3 & 4B3A3A3A2B & ? \\
$(S_4\times S_3) \cap A_7$ & 18+3 & 4B3A3A3C2A & 0 & 0 & - \\
$(S_4\times S_3) \cap A_7$ & 18+3 & 4B3A3A3C2B & 0 & 0 & - \\
$(S_4\times S_3) \cap A_7$ & 18+3 & 4B3A3C3C2A & ? \\
$(S_4\times S_3) \cap A_7$ & 18+3 & 4B3A3C3C2B & ? \\
$(S_4\times S_3) \cap A_7$ & 18+3 & 4B3C3C3C2A & 24 & 2 & pos \\
$(S_4\times S_3) \cap A_7$ & 18+3 & 4B3C3C3C2B & 27 & 2 & pos \\
&&&&\\
$L_2(11)$ & 55 & 6A5A5A & 101 & 2 & pos\\
&&&&\\
$L_3(3)$ & 117 & 3A & 0 & 0 & -\\
$L_3(3)$ & 117 & 3C & 144 & 2 & pos\\
&&&&\\
$(S_5\times S_3) \cap A_8$ & 30 & 3A3A & ? \\
$(S_5\times S_3) \cap A_8$ & 30 & 3A3C & 0 & 0 & -\\
$(S_5\times S_3) \cap A_8$ & 30 & 3C3A & 0 & 0 & -\\
$(S_5\times S_3) \cap A_8$ & 30 & 3C3C & 0 & 0 & - \\
$(S_5\times S_3) \cap A_8$ & 15+30 & 3A & 67 & 2 & pos\\
$(S_5\times S_3) \cap A_8$ & 15+30 & 3C & 0 & 0 & - \\
\hline
\caption{Results}\label{tab:results}
\end{longtable}

\end{document}